\newcommand{\R}{\mathbb{R}}
\newcommand{\bC}{\mathbb{C}}
\newcommand{\G}{\Gamma}
\newcommand{\g}{\gamma}
\newcommand{\cM}{\mathcal M}
\newcommand{\re}{\mathrm{Re}\,}
\newcommand{\wt}{\widetilde}
\newtheorem{theorem}{Theorem}[section]
\newtheorem{definition}[theorem]{Definition}
\newtheorem{lemma}[theorem]{Lemma}
\newtheorem{proposition}[theorem]{Proposition}
\newtheorem{corollary}[theorem]{Corollary}
\DeclareMathOperator{\Mod}{\mathscr{M}}
\DeclareMathOperator{\Isom}{Isom}
\author{Olivier Glorieux and Andrew Yarmola}
\title{Random triangles on flat tori}
\begin{document}
\maketitle

\begin{abstract}
Inspired by classical puzzles in geometry that ask about probabilities of geometric phenomena, we give an explicit formula for the probability that a random triangle on a flat torus is homotopically trivial. Our main tool for this computation involves reducing the problem to new invariant of measurable sets in the plane that is unchanged under area-preserving affine transformations. Our result show that this probability is minimized at all rectangular tori and maximized at the regular hexagonal torus.
\end{abstract}

\section{Introduction}
A classical problem, recorded as problem 58 in a 1893 book of puzzles by Charles Dodgson \cite{Dodgson}, is to determine the probability that a triangle on the plane is obtuse. Since the plane has no finite measure invariant by translation, this problem is not well defined and several answers can be given. For triangles in finite area regions, this questions has been studied by Kendall \cite{Kendall},  Guy \cite{Guy} and Portnoy \cite{Portnoy}, among others. On the sphere the problem has been treated by Cooper-Eum-Li \cite{LiCooperEum} and Isokawa look at similar problems on the hyperbolic plane \cite{Isokawa}. Our interest lies in extending such probabilistic puzzles to finite area surfaces.\\

For us, a triangle is a choice of three points of a surface with vertices connected by shortest geodesic segments. The first thing to note is that such triangles fall into two classes: homotopically trivial and homotopically non-trivial. With this dichotomy in mind, we ask the following question for triangles on surfaces:
\begin{itemize}
\item What is the probability of a random triangle being homotopically trivial on a surface?
\end{itemize}

In this note, we answer this question for flat tori. Let $\Mod_1$ be the moduli space of flat structures of the torus $T^2$. We identify $\Mod_1$ with the set of points in the modular domain 
$$\cM = \{a+i b \in \bC \mid  a \in (-1/2,1/2], \; b > \sqrt{1-a^2}\text{ or } b = \sqrt{1-a^2} \text{ and } a \in [0,1/2] \}.$$
For $\tau \in \cM$, we let $\Sigma_\tau$ be the flat structure $\R^2/ \langle 1, \tau \rangle$ in $\Mod_1$. Define $P(\tau)$ to be the probability that three points, sampled uniformly on $\Sigma_\tau$, give rise to a homotopically-trivial triangle by connecting them by shortest geodesic segments. Note, since shortest geodesic segments between points are unique away from a set of measure zero, $P(\tau)$ is well-defined. Our main result gives $P(\tau)$ as a rational function of $a, b$ where $a + b i = \tau \in \cM$.
\begin{theorem}\label{thm:flat_torus} For $\tau \in \cM$ one has
$$P(\tau) = \frac{9}{16} + \frac{3|a|^2}{8b^2} - \frac{|a|^3}{2b^2} - \frac{|a|^3}{2b^4} + \frac{17 |a|^4}{16b^4} - \frac{|a|^5}{2b^4}$$ 
\end{theorem}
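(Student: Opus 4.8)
The plan is to reduce the computation of $P(\tau)$ to a two-dimensional measure on the Dirichlet (Voronoi) fundamental domain of the lattice, recognize that measure as an affine invariant, and then evaluate it by an explicit piecewise-polynomial integration.

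\emph{Step 1 (reduction to the Dirichlet domain).} Fix $\tau = a+bi \in \cM$, set $\Lambda = \langle 1,\tau\rangle$, and let $D = D_\tau \subset \R^2$ be the Dirichlet domain of $\Lambda$ centered at the origin; it is a centrally symmetric convex polygon — a hexagon for $a \neq 0$, a rectangle for $a = 0$ — with $|D| = b$. After lifting to $\R^2$ and using translation invariance, place the first sampled point at $0$. The shortest geodesic from $p_0$ to $p_1$ lifts to the segment from $0$ to the unique representative $u \in D$ of $p_1 - p_0$, and the second side to the translate by the representative $v \in D$ of $p_2 - p_1$; since $p_0, p_1, p_2$ are i.i.d.\ uniform, $u$ and $v$ are independent and uniform on $D$. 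The third side lifts to the translate by the shortest representative $w$ of $-(u+v) \bmod \Lambda$, so $u+v+w \in \Lambda$, and the triangle is homotopically trivial exactly when the lifted loop closes, i.e.\ $u+v+w = 0$; since $D = -D$ this holds, off a set of measure zero, precisely when $u+v \in D$. Therefore
$$ P(\tau) = \frac{1}{|D|^2}\,\bigl|\{(u,v)\in D\times D : u+v \in D\}\bigr| = \frac{1}{|D|^2}\,\bigl|\{(u,v,w)\in D^3 : u+v+w = 0\}\bigr|. $$

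\emph{Step 2 (the affine invariant).} For a centrally symmetric measurable $A \subset \R^2$ of positive finite area, define $I(A) = |A|^{-2}\,\bigl|\{(u,v,w)\in A^3 : u+v+w = 0\}\bigr|$; equivalently $I(A) = |A|^{-2}\int_A g_A(s)\,ds$, where $g_A(s) = |A\cap(A+s)|$ is the covariogram of $A$, a piecewise-quadratic function when $A$ is a polygon. A linear change of variables gives $I(gA) = I(A)$ for every $g \in \GL_2(\R)$ with $|\det g| = 1$, so by Step~1 one has $P(\tau) = I(D_\tau)$, a quantity depending only on the area-preserving affine class of the polygon $D_\tau$. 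This is the invariant mentioned in the introduction; concretely, it lets us move $D_\tau$ by $\SL_2(\R)$ into a convenient normal form before integrating.

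\emph{Step 3 (explicit geometry and integration).} The claimed formula depends on $\tau$ only through $|a|$ and $b$, matching the isometry $\tau \mapsto -\bar\tau$, so assume $a \ge 0$. For $\tau \in \cM$ with $a \in (0,\tfrac12)$, the condition $b > \sqrt{1-a^2}$ forces $(1,0)$ and $(a,b)$ to be the two shortest generators of $\Lambda$ and $(a-1,b)$ to be shorter than $(a+1,b)$, so the relevant Voronoi vectors are $\pm(1,0), \pm(a,b), \pm(a-1,b)$, and the six vertices of $D_\tau$ are the circumcenters of the fundamental triangles $\{0,(1,0),(a,b)\}$, $\{0,(a,b),(a-1,b)\}$, $\{0,(a-1,b),-(1,0)\}$ and their negatives — explicit rational functions of $a,b$. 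One then writes $g_{D_\tau}$ as a piecewise-quadratic function whose regions of polynomiality are cut out by the arrangement of lines through the origin in the edge directions of $D_\tau$, and a case analysis over $\cM$ (with the rectangular locus $a=0$ and the regular hexagon $\tau = \tfrac12 + \tfrac{\sqrt 3}{2}i$ as the boundary/degenerate cases) identifies the finitely many combinatorial types that occur. Integrating the resulting piecewise-quadratic over $D_\tau$ and simplifying yields the stated rational function.

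\emph{Expected main obstacle, and consequences.} Essentially all of the work is in Step~3: for $s$ ranging over $D_\tau$, one must track which edges of $D_\tau + s$ cross which edges of $D_\tau$, assemble $g_{D_\tau}$ from the corresponding triangular and quadrilateral corrections, and carry out and simplify the resulting double integral. This is a finite but delicate computation; Steps~1 and 2 are short, and the affine invariance of Step~2 is exactly what keeps the case analysis bounded. As consistency checks one recovers $P(i) = (3/4)^2 = 9/16$ directly (the square torus, where $u+v$ has a product-of-triangular density) and $P(\tfrac12 + \tfrac{\sqrt3}{2}i) = 7/12$ (the regular hexagon). Finally, writing $P(\tau) = \tfrac{9}{16} + \tfrac{|a|^2}{b^2}\bigl(\tfrac38 - \tfrac{|a|}{2}\bigr) - \tfrac{|a|^3}{b^4}\bigl(\tfrac12 - \tfrac{17|a|}{16} + \tfrac{|a|^2}{2}\bigr)$ and using $b^2 > 1 - a^2$ on $\cM$, elementary calculus shows $P(\tau) \ge 9/16$ with equality exactly on the rectangular locus $a = 0$, and that the maximum of $P$ on $\cM$ is attained at the hexagonal torus — the minimization and maximization claims of the introduction.
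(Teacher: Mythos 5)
Your Steps 1 and 2 are correct and coincide with the paper's reduction: the closure-of-the-lift criterion is exactly the paper's observation that the triangle is trivial iff $\wt{x}_3 \in D_\tau \cap (D_\tau)_{\wt{x}_2}$, and your invariant $I(A) = |A|^{-2}\int_A g_A(s)\,ds$ is precisely the paper's $F(A,A,A)/area(A)^2$ with its $\R^2 \rtimes \mathrm{SL}_2(\R)$-invariance. Your consistency checks ($9/16$ for the square torus, $7/12$ for the hexagonal one) are also correct.

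The genuine gap is Step 3, which is the entire content of the theorem: you describe a computation ("write $g_{D_\tau}$ as a piecewise-quadratic\dots a case analysis\dots identifies the finitely many combinatorial types\dots integrating\dots yields the stated rational function") but never carry it out, so the specific rational function in the statement is never actually derived. Moreover, the organizing device that makes this computation tractable is missing from your plan. The paper does not integrate the covariogram of $D_\tau$ region by region over a case analysis of combinatorial types; instead it uses the affine invariance you established to map $D_\tau$ by an explicit element of $\mathrm{GL}_2(\R)$ onto a single normal form $H(s,t)$ --- the unit square $Q$ with two opposite corner triangles $T, -T$ removed, where $s = a$ and $t = a/(a^2+b^2)$ --- so that there is only \emph{one} combinatorial type for all $\tau$. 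It then exploits additivity of $F$ to write $F(H,H,H) = F(Q,Q,Q) - 4F(H,H,T) - 2F(H,T,T) - 2F(T,Q,Q)$ and evaluates each term separately (e.g.\ $F(H,T,T) = area(T)^2$ because the difference body $V = T - T$ sits inside $H$, and $F(Q,Q,Q) = 9/16$). Without this normalization and decomposition, or an equivalent explicit integration, your argument establishes only that $P(\tau)$ is \emph{some} $\mathrm{SL}_2(\R)$-invariant of the Voronoi hexagon expressible by piecewise-polynomial integrals --- not that it equals the displayed formula. To complete the proof you would need to either adopt the paper's $H(s,t)$ reduction or actually exhibit and integrate the piecewise-quadratic covariogram.
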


\begin{corollary} $$\frac{1}{vol(\Mod_1)}\int_{\tau \in \Mod_1} P(\tau) \, d \tau = \frac{1}{20} \left(13 - \frac{3\,\sqrt{3}}{\pi}\right)$$
where $d\tau$ is the Teichm\"uller (i.e. hyperbolic) metric on $\Mod_1$ and so is the volume.
\end{corollary}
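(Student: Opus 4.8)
The plan is to integrate the rational function of Theorem~\ref{thm:flat_torus} directly over the modular domain. Since the Teichm\"uller metric on $\Mod_1$ is the restriction of the hyperbolic metric to $\cM \subset \Hyp$, we have $d\tau = b^{-2}\, da\, db$ with $\tau = a + bi$, and, $\cM$ being the standard $\PSL(2,\Z)$-fundamental domain, $vol(\Mod_1) = \int_{-1/2}^{1/2}\int_{\sqrt{1-a^2}}^{\infty} b^{-2}\, db\, da = \int_{-1/2}^{1/2}(1-a^2)^{-1/2}\, da = \pi/3$. Note that any constant rescaling of the metric cancels in the ratio, so only conformality to $b^{-2}\,da\,db$ matters. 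As $P(\tau)$ depends on $\tau$ only through $|a|$ and $b$ and the involution $a \mapsto -a$ preserves $d\tau$, the target integral reduces to $2\int_{0}^{1/2}\!\int_{\sqrt{1-a^2}}^{\infty} P(a,b)\, b^{-2}\, db\, da$.

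First I would carry out the $b$-integration. Every monomial of $P$ is a constant times $a^{k}$ times one of $1,\, b^{-2},\, b^{-4}$, and $\int_{\sqrt{1-a^2}}^{\infty} b^{-2}\, db = (1-a^2)^{-1/2}$, $\int_{\sqrt{1-a^2}}^{\infty} b^{-4}\, db = \tfrac13(1-a^2)^{-3/2}$, $\int_{\sqrt{1-a^2}}^{\infty} b^{-6}\, db = \tfrac15(1-a^2)^{-5/2}$. What remains is $2\int_{0}^{1/2} Q(a)\, da$ with $Q$ a finite sum of terms $c\, a^{k}(1-a^2)^{-(2j+1)/2}$, $0 \le k \le 5$, $0 \le j \le 2$.

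Next I would substitute $a = \sin\theta$ with $\theta \in [0,\pi/6]$ (as $\sin(\pi/6)=1/2$); each summand becomes $c\,\sin^{k}\theta\,\cos^{-2j}\theta\, d\theta$, an elementary integral — for even $k$ a polynomial in $\tan\theta$ and $\sec\theta$, for odd $k$ handled by $u=\cos\theta$ — evaluating to a combination of $\pi/6$, $\tan(\pi/6)=1/\sqrt3$, $\cos(\pi/6)=\sqrt3/2$ and rationals. Collecting coefficients, the $\pi$-part of $\int_{\Mod_1} P\, d\tau$ works out to $\tfrac{13}{60}\pi$, the purely rational contributions cancel to $0$, and the $\sqrt3$-part to $-\tfrac1{20}\sqrt3$; dividing $\int_{\Mod_1} P\, d\tau = \tfrac{13}{60}\pi - \tfrac1{20}\sqrt3$ by $vol(\Mod_1) = \pi/3$ gives $\tfrac{3}{\pi}\bigl(\tfrac{13}{60}\pi - \tfrac1{20}\sqrt3\bigr) = \tfrac1{20}\bigl(13 - \tfrac{3\sqrt3}{\pi}\bigr)$, as claimed.

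There is no conceptual obstacle; the only real labor is the bookkeeping — six monomials of $P$, their three distinct $b$-integrals, and the resulting half-dozen one-variable trigonometric integrals, with a final accounting of which pieces contribute $\pi$, which contribute $\sqrt3$, and which are rational. A built-in consistency check is that all rational contributions must cancel, since the stated value contains none.
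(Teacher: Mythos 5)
The paper states this corollary without giving any proof, so there is nothing to compare against; your direct integration over the fundamental domain is evidently the intended route and it is correct. I verified the computation: with $d\tau = b^{-2}\,da\,db$ and $vol(\Mod_1)=\pi/3$, the six monomials of $P$ integrate (after the elementary $b$-integrals and the substitutions $a=\sin\theta$ or $u=1-a^2$) to give $\int_{\Mod_1}P\,d\tau = \tfrac{13}{60}\pi - \tfrac{1}{20}\sqrt{3}$ with the rational contributions cancelling exactly as you predict, yielding $\tfrac{1}{20}\left(13 - \tfrac{3\sqrt{3}}{\pi}\right)$ after dividing by $\pi/3$.
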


\begin{corollary} $P(\tau) = 9/16$ if and only if the two shortest closed geodesics on $\Sigma_\tau$ meet at a right angle. Further, $P(\tau) = 7/12$ if and only if $\Isom(\Sigma_\tau)$ contains $D_6$ as a subgroup.
\end{corollary}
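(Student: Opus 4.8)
The plan is to read off both equivalences from the explicit formula of Theorem~\ref{thm:flat_torus}, after recording two elementary facts about flat tori. First, for $\tau = a+bi \in \cM$ every nonzero vector of the lattice $\Z+\Z\tau$ has length $\ge 1$, and among $\tau,\tau\pm1$ the vector $\tau$ is the shortest when $|a|<1/2$; hence the two shortest closed geodesics of $\Sigma_\tau$ lie in the directions of $1$ and $\tau$, and they meet at a right angle exactly when $\arg\tau = \pi/2$, i.e.\ $a = 0$ --- equivalently, $\Sigma_\tau$ is a rectangular torus (the case $a = 1/2$, where $\tau$ and $\tau-1$ tie, gives an acute angle and is easily excluded). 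Second, the finite group of linear isometries preserving $\Z+\Z\tau$ contains the order-$12$ dihedral group $D_6$ if and only if $\Z+\Z\tau$ is, up to rotation and scaling, the hexagonal lattice, which within $\cM$ means $\tau = e^{i\pi/3} = \tfrac{1}{2}+\tfrac{\sqrt{3}}{2}i$; this is the classification of holohedries of rank-two lattices. Granting these, it remains to prove the two algebraic statements $P(\tau) = 9/16 \iff a = 0$ and $P(\tau) = 7/12 \iff \tau = \tfrac{1}{2}+\tfrac{\sqrt{3}}{2}i$.

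For the first, clearing denominators in Theorem~\ref{thm:flat_torus} gives $b^4\bigl(P(\tau)-\tfrac{9}{16}\bigr) = |a|^2\, f(|a|,b^2)$, where $f(u,s) = \tfrac{3s}{8} - \tfrac{us}{2} - \tfrac{u}{2} + \tfrac{17u^2}{16} - \tfrac{u^3}{2}$. Since $|a|\le 1/2$ we have $\partial f/\partial s = (3-4u)/8 > 0$, so on the admissible range $b^2 \ge 1-a^2$ we get $f(|a|,b^2) \ge f(|a|,1-a^2) = \tfrac{3}{8} - |a| + \tfrac{11}{16}|a|^2$, a quadratic in $|a|$ of negative discriminant, hence strictly positive. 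Therefore $P(\tau) \ge 9/16$ with equality iff $a = 0$; in particular this recovers that rectangular tori minimize $P$.

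For the second, I would show $P(\tau) \le 7/12$ with equality only at the hexagonal torus. Writing $u = |a|$ and $s = b^2$, set $Q(u,s) = b^4\bigl(\tfrac{7}{12}-P(\tau)\bigr) = \tfrac{s^2}{48} - u^2 f(u,s)$. For fixed $u$ this is an upward-opening quadratic in $s$ with vertex $s_0 = 3u^2(3-4u)$, and $s_0 - (1-u^2) = (1-2u)(6u^2-2u-1) \le 0$ on $[0,1/2]$ (the two factors have opposite sign there), so on the admissible region the minimum of $Q(u,\cdot)$ is attained at $s = 1-u^2$. One then computes $48\,Q(u,1-u^2) = (1-u^2)^2 - 48u^2\bigl(\tfrac{3}{8} - u + \tfrac{11}{16}u^2\bigr) = (1-2u)^2(1+4u-8u^2)$, which is nonnegative on $[0,1/2]$ and vanishes only at $u = 1/2$. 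Hence $Q(u,s)\ge 0$ throughout the parameter region, with equality exactly at $(u,s)=(\tfrac{1}{2},\tfrac{3}{4})$, i.e.\ $\tau = \tfrac{1}{2}+\tfrac{\sqrt{3}}{2}i$ (using $a\in(-1/2,1/2]$); substituting this into Theorem~\ref{thm:flat_torus} confirms $P = 7/12$ there.

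The main obstacle is the global inequality $P(\tau)\le 7/12$: each step is elementary, but it needs the two-stage optimization --- eliminate $b$ first, then $a$ --- and careful bookkeeping of the cubic and quartic factorizations above, together with the observation that the vertex $s_0$ reaches the curve $b^2 = 1-a^2$ exactly at $|a| = 1/2$, so that the reduction to the boundary arc is tight precisely at the hexagonal torus. A secondary point needing care is the geometric input that $\Isom(\Sigma_\tau)\supseteq D_6$ forces $\Z+\Z\tau$ to be hexagonal, which follows from the classification of point groups of planar lattices.
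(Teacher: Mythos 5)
Your proposal is correct, and in fact does more than the paper, which states this corollary without any proof (the intended argument is evidently ``read it off from Theorem~\ref{thm:flat_torus}''). The nontrivial content is the two ``only if'' directions, and your route --- proving the global bounds $9/16 \le P(\tau) \le 7/12$ over all of $\cM$ and identifying the equality loci --- is the right way to get them. I checked the key algebra: $b^4(P-\tfrac{9}{16}) = |a|^2 f(|a|,b^2)$ with $f(u,1-u^2) = \tfrac{3}{8}-u+\tfrac{11}{16}u^2$ of negative discriminant; $s_0-(1-u^2) = -12u^3+10u^2-1 = (1-2u)(6u^2-2u-1)$; and $48\,Q(u,1-u^2) = 1-20u^2+48u^3-32u^4 = (1-2u)^2(1+4u-8u^2)$ with $1+4u-8u^2>0$ on $[0,1/2]$. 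All of these identities and sign claims are correct, and the two-stage optimization (monotonicity in $s=b^2$ on the admissible range, then a one-variable analysis on the boundary arc $s=1-u^2$) is sound; as a byproduct you recover the minimization/maximization claims from the abstract. One caveat worth recording: the second equivalence requires the point-group reading you adopt (``the linear isometries preserving $\Z+\Z\tau$ contain $D_6$''), because as literally stated the condition $\Isom(\Sigma_\tau)\supseteq D_6$ holds for \emph{every} flat torus --- an order-$6$ translation together with $x\mapsto -x$ already generates a dihedral group of order $12$. That imprecision is in the paper's statement, not in your argument, but your proof should state explicitly that it is characterizing hexagonal lattices via the stabilizer of a point rather than the full isometry group.
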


Our main tool for this computation involves reducing the problem to computing a new invariant for measurable sets in the plane that is preserved under the action of $\R^2 \rtimes \mathrm{SL}_2(\R)$. This reduction arrises by rephrasing the question in terms of the overlap of the Dirichlet domains of the sampled points. Even though the observation about Dirichlet domains extends to surfaces of higher genus with a hyperbolic or even Riemannian metric, the process of computing an exact value in that setting seems rather difficult. For example, for the complete finite-area hyperbolic metric on the thrice punctured sphere, the computation of $P(\tau)$ begins to involve elliptic functions and their integrals. The asymptotic behavior of $P(\tau)$ over the moduli space of higher genus surfaces is more amenable to an analysis, which we address in a forthcoming paper \cite{forthcoming}. 

\section{Reduction to the overlap invariant}

In this section, we reduce the computation of $P(\tau)$ to that of an invariant of triples of measurable sets in $\R^2$ that is preserved under  the action of $\R^2 \rtimes \mathrm{SL}_2(\R)$.
\begin{definition}
Let $A\subset \R^2$ be a subset. For $w \in \R^2$, define 
$$A_w := \{ x+ w\,|\, x\in A\}.$$
Given measurable $A,B,C\subset \R^2$ , consider the function 
$$F(A,B,C) := \int_{a \in A} area(B\cap C_a) \, da.$$
\end{definition}

For $\tau \in \cM$, we have $\Sigma_\tau = \R^2/ \Gamma_\tau$ where $\Gamma_\tau = \langle 1, \tau\rangle$. Recall that the Dirichlet domain of a point $x \in \R^2$ is the set $D_{\tau, x} = \{ y \in \R^2 \mid d(x,y) \leq d(x, \gamma y) \text{ for all } \gamma \in \Gamma_\tau\}$. Since $\Gamma_\tau$ is a subgroup of translations in the plane, $D_{\tau, x_1}$ and $D_{\tau, x_2}$ are translates of each other for any $x_1, x_2 \in \R^2$. It therefore makes sense to fix the origin $o = (0,0) \in \R^2$ and let $D_\tau = D_{\tau, o}$. A key property of Dirichlet domains is the fact that the distance between $y\in D_{\tau,x}$ and $x$ on $\R^2$ is equal to the distance between $x+\G_t$ and $y+\G_t$ on $\Sigma_\tau$. In particular the geodesic line between $x$ and $y\in D_{\tau,x}$ in $\R^2$, projects to the minimizing geodesic on the torus. 

\begin{proposition}
$P(\tau) = F(D_\tau,D_\tau,D_\tau)/area(D_\tau)^2$
\end{proposition}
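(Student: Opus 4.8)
The plan is to unwind the definitions and express the event ``the triangle on $x_1, x_2, x_3$ is homotopically trivial'' as a coincidence of lifts lying in a common Dirichlet domain. First I would fix lifts: sampling three points uniformly on $\Sigma_\tau$ is the same as sampling three points $x_1, x_2, x_3$ uniformly and independently in a fundamental domain $F$ of $\Gamma_\tau$ of area $area(D_\tau)$, then normalizing by $area(D_\tau)^3$. Since the triangle is formed by shortest geodesic segments, the key observation (using the distance property of Dirichlet domains quoted just before the statement) is that the geodesic segment from $x_i$ to $x_j$ on $\Sigma_\tau$ lifts, starting at $x_i$, to the Euclidean segment from $x_i$ to the unique lift of $x_j$ lying in $D_{\tau, x_i}$. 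Call that lift $x_j^{(i)}$. The triangle with vertices $\overline{x_1}, \overline{x_2}, \overline{x_3}$ is then homotopically trivial precisely when the three lifted segments close up, i.e. when there is a single lift of the triangle to $\R^2$ — equivalently, when one can choose lifts $\tilde x_1, \tilde x_2, \tilde x_3$ of the three points so that $\tilde x_j \in D_{\tau, \tilde x_i}$ for all pairs $i,j$. I would make this ``triviality $\iff$ mutual membership in Dirichlet domains'' equivalence precise as the first step; it is a statement about the fundamental group of the torus acting on the universal cover, and the only subtlety is that one must check the three pairwise conditions are simultaneously satisfiable by one consistent choice of lifts (here translation-invariance of the $D_\tau$'s makes this automatic once two of the three lifts are fixed).

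Next I would rewrite the condition $\tilde x_j \in D_{\tau, \tilde x_i}$ for all $i,j$ as a condition on the independent uniform variables. Because $D_{\tau, x} = (D_\tau)_x$ is just $D_\tau$ translated by $x$, the condition $\tilde x_j \in D_{\tau, \tilde x_i}$ becomes $\tilde x_j - \tilde x_i \in D_\tau$. Fixing $\tilde x_1$ (say the lift in $F$) and letting $\tilde x_2, \tilde x_3$ range over all lifts of the other two points, the event is: there exist $\gamma_2, \gamma_3 \in \Gamma_\tau$ with $x_2 + \gamma_2 \in D_{\tau, x_1}$, $x_3 + \gamma_3 \in D_{\tau, x_1}$, and $x_3 + \gamma_3 \in D_{\tau, x_2 + \gamma_2}$. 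Since the interiors of the $\Gamma_\tau$-translates of $D_\tau$ tile the plane, for almost every $x_1$ there is exactly one translate of $x_2$ lying in $D_{\tau, x_1}$ and exactly one translate of $x_3$; I would then argue that the probability of the event equals the probability, over $\tilde x_1, \tilde x_2, \tilde x_3$ sampled uniformly in $D_\tau$ (which is legitimate since $D_\tau$ is itself a fundamental domain), that $\tilde x_3 - \tilde x_2 \in D_\tau - D_\tau$ in the appropriate centered sense — more precisely, that $\tilde x_3 \in D_{\tau, \tilde x_2}$, with $\tilde x_1$ playing no further role. This is the step where I expect the main bookkeeping difficulty: carefully matching up ``which lift'' so that the three conditions collapse to a single pairwise condition without over- or under-counting, and confirming the reduction is measure-preserving.

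Once that is done, the computation is immediate. Sampling $\tilde x_1, \tilde x_2, \tilde x_3$ independently and uniformly on $D_\tau$, and writing $b = \tilde x_2$, $c = \tilde x_3$, the probability that $\tilde x_3 \in D_{\tau, \tilde x_2}$, i.e. that $\tilde x_3 \in (D_\tau)_{b}$ — wait, one must recentre: the correct event from the previous paragraph is that the (unique) lift of $\overline{x_3}$ in $D_{\tau, \tilde x_2}$ coincides with the (unique) lift in $D_{\tau, \tilde x_1}$, which unwinds to $\tilde x_3 \in (D_\tau)_{\tilde x_2 - \tilde x_1} \cap D_\tau$ after translating everything to be based at $\tilde x_1$. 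Integrating: the probability is
$$\frac{1}{area(D_\tau)^3}\int_{\tilde x_1 \in D_\tau}\int_{\tilde x_2 \in D_\tau} area\big( D_\tau \cap (D_\tau)_{\tilde x_2 - \tilde x_1}\big)\, d\tilde x_2 \, d\tilde x_1.$$
By translation invariance of Lebesgue measure the inner double integral equals $\int_{a \in D_\tau} area(D_\tau \cap (D_\tau)_a)\, da \cdot area(D_\tau) = F(D_\tau, D_\tau, D_\tau)\cdot area(D_\tau)$ after substituting $a = \tilde x_2 - \tilde x_1$ and noting the $\tilde x_1$-integral contributes a factor $area(D_\tau)$ only over the region where the shift keeps us inside — here I would instead directly change variables so that the $area(D_\tau)$ factors cancel to leave $F(D_\tau, D_\tau, D_\tau)/area(D_\tau)^2$. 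I would present this last integral manipulation cleanly using Fubini and the definition of $F$, which gives exactly $P(\tau) = F(D_\tau, D_\tau, D_\tau)/area(D_\tau)^2$. The main obstacle throughout is conceptual rather than computational: pinning down the precise lift-matching in the middle step so that ``homotopically trivial triangle'' really is equivalent to the single overlap condition that $F$ measures.
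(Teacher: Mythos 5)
Your central idea is the same as the paper's: lift the three points, observe that the triangle closes up (is null\--homotopic) exactly when the lift of $x_3$ nearest to $\tilde x_1$ also lies in the Dirichlet domain centered at the lift of $x_2$ nearest to $\tilde x_1$, and then integrate the area of the overlap $D_\tau \cap (D_\tau)_w$. The paper reaches the formula faster by using homogeneity of the torus to fix $\tilde x_1 = o$ from the start, so that $\tilde x_2, \tilde x_3$ range over $D_\tau$, the answer is immediately $F(D_\tau,D_\tau,D_\tau)/area(D_\tau)^2$, and there is no third integration and no lift\--matching to track.

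However, the final displayed integral in your argument is wrong as written, and it fails at exactly the bookkeeping point you flagged. If $\tilde x_1, \tilde x_2, \tilde x_3$ are sampled independently and uniformly on $D_\tau$, the lift of $x_2$ lying in $D_{\tau,\tilde x_1}$ is not $\tilde x_2$ but $\tilde x_2 + \gamma_2$ for a $\tilde x_1$\--dependent $\gamma_2 \in \Gamma_\tau$; the relevant translation vector is the representative of $\tilde x_2 - \tilde x_1$ reduced modulo $\Gamma_\tau$ into $D_\tau$, not the raw difference $\tilde x_2 - \tilde x_1$. Since $w \mapsto area(D_\tau \cap (D_\tau)_w)$ is not $\Gamma_\tau$\--periodic (it equals $area(D_\tau)$ at $w=0$ and vanishes at every nonzero lattice point), this substitution matters. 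Concretely, your double integral satisfies
$$\int_{\tilde x_1 \in D_\tau}\int_{\tilde x_2 \in D_\tau} area\bigl( D_\tau \cap (D_\tau)_{\tilde x_2 - \tilde x_1}\bigr)\, d\tilde x_2 \, d\tilde x_1 = \int_{w \in \R^2} area\bigl(D_\tau \cap (D_\tau)_w\bigr)^2\, dw,$$
which for the unit square $Q$ evaluates to $4/9$ rather than $F(Q,Q,Q) = 9/16$, so the formula does not survive your change of variables. The repair is standard: either fix $\tilde x_1 = o$ by homogeneity (the paper's route), or let $\tilde x_2$ and $\tilde x_3$ range over $(D_\tau)_{\tilde x_1}$ rather than $D_\tau$ (equivalently, use that the mod\--$\Gamma_\tau$ reduction of $\tilde x_2 - \tilde x_1$ into $D_\tau$ is uniformly distributed on $D_\tau$). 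Either way the inner integral becomes $F(D_\tau,D_\tau,D_\tau)$ independently of $\tilde x_1$, the outer integral contributes a factor of $area(D_\tau)$ that cancels against the normalization $area(D_\tau)^3$, and the claimed identity follows.
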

\begin{proof}
Pick $3$ points $\{x_1,x_2, x_3\}$ on the flat torus $\Sigma_\tau$. Connecting these by shortest geodesics gives a triangle. Lifting to the universal cover, we can suppose that $\wt{x}_1 = o$ is the center of $D_\tau$ and choose $\wt{x}_2, \wt{x}_3 \in D_\tau$. Now, by lifting the geodesic from $x_1$ to $x_2$, we see that the triangle formed by $\{x_1,x_2,x_3\}$ on $\Sigma_\tau$ is homotopically trivial if and only if $\wt{x}_3$ is contained in the Dirichlet domain centered at $\wt{x}_2$. Thus, given $\wt{x}_2$, the set of $\wt{x}_3$ that give a homotopically trivial triangle is precisely $D_\tau \cap D_{\tau, \wt{x}_2} = D_\tau \cap (D_\tau)_{\wt{x}_2}$. Integrating this over all possible choices of for $\wt{x}_2 \in D_\tau$ gives $F(D_\tau,D_\tau,D_\tau)$. Lastly, normalizing to unit area gives the scaling factor of $1/area(D_\tau)^2$ and the desired result.
\end{proof}

To compute $P(\tau)$, we analyze the behavior of $F$ and the shape of $D_\tau$.

\begin{proposition}[Properties of $F$]\label{prop:prop_of_F}\hspace*{0.1in}
\begin{enumerate}
\item $F$ is invariant under $\R^2 \rtimes \mathrm{SL}_2(\R)$ acting diagonally by affine transformations.
\item $F(t \, A, t \, B, t \, C) = t^4 \, F(A,B,C)$ where $t X$ denotes scaling $X$ by $t \in \R^+$.
\item $F$ is additive with respect to disjoint union in $A, B,$ and $C$.
\item $F(Q,Q,Q)=9/16$ where $Q = [-1/2,1/2] \times [-1/2,1/2]$ is the unit square. 
 \item $F(\R^2,X,X)= area(X)^2$ for all measurable sets $X$.
\end{enumerate}
\end{proposition}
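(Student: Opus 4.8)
The plan is to rewrite $F$ once and for all as a single double integral of a product of indicator functions,
$$F(A,B,C)=\int_{\R^2}\int_{\R^2}\mathds{1}_A(a)\,\mathds{1}_B(y)\,\mathds{1}_C(y-a)\,dy\,da,$$
which follows from the definition because $area(B\cap C_a)=\int_{\R^2}\mathds{1}_B(y)\,\mathds{1}_{C+a}(y)\,dy=\int_{\R^2}\mathds{1}_B(y)\,\mathds{1}_C(y-a)\,dy$, and Tonelli's theorem applies unconditionally since the integrand is nonnegative. In this form each of the five items reduces to a change of variables or a Fubini computation, so I would record this identity first and then run down the list.

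For (1) I would check the two generating moves of the group separately. For $M\in\mathrm{SL}_2(\R)$ the substitution $a=Ma'$, $y=My'$ has Jacobian $(\det M)^2=1$, returns the first two indicators as $\mathds{1}_A(a')$, $\mathds{1}_B(y')$, and, since $y-a=M(y'-a')$, returns the third as $\mathds{1}_C(y'-a')$; hence $F(MA,MB,MC)=F(A,B,C)$. For a translation the substitution $a=a'+v$, $y=y'+v$ fixes the difference $y-a$, so $F(A+v,B+v,C)=F(A,B,C)$; together these give the invariance in (1). Item (2) is the analogous substitution $a=ta'$, $y=ty'$ for the scaling by $t$: it contributes a Jacobian $t^2\cdot t^2=t^4$ and leaves the three indicators untouched, so $F(tA,tB,tC)=t^4F(A,B,C)$. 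Item (3) is immediate from the integral form: if $A=A_1\sqcup A_2$ up to a null set then $\mathds{1}_A=\mathds{1}_{A_1}+\mathds{1}_{A_2}$ a.e., and linearity of the integral splits $F$; the identical argument handles the $B$- and $C$-slots.

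For (4) I would use the product structure $Q=I\times I$ with $I=[-1/2,1/2]$: writing $a=(a_1,a_2)$, $y=(y_1,y_2)$, the integrand factors across the two coordinates, so $F(Q,Q,Q)$ equals the square of the one-dimensional integral $\int_I\!\int_I\mathds{1}_I(y_1)\,\mathds{1}_I(y_1-a_1)\,dy_1\,da_1$. The inner $y_1$-integral is the length of $I\cap(I+a_1)$, which is $1-|a_1|$ for $a_1\in I$, and $\int_{-1/2}^{1/2}(1-|a_1|)\,da_1=3/4$, so $F(Q,Q,Q)=(3/4)^2=9/16$. For (5), Fubini lets me do the $a$-integral first: $\int_{\R^2}\mathds{1}_X(y-a)\,da=\int_{\R^2}\mathds{1}_X(z)\,dz=area(X)$ by translation invariance of Lebesgue measure (both sides being $+\infty$ when $area(X)=\infty$), leaving $\int_{\R^2}\mathds{1}_X(y)\,area(X)\,dy=area(X)^2$.

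I do not expect any step to be a real obstacle — the content is measure-theoretic bookkeeping. The one place that deserves care is (1): one must check the affine group is genuinely absorbed by the substitutions, and the reason it is, is precisely that $F$ is organized around the translation-invariant difference $y-a$ together with the unit Jacobian of $\mathrm{SL}_2(\R)$. One should also keep half an eye on measurability and infinite values, but that is harmless here since every integrand is nonnegative and Tonelli applies throughout.
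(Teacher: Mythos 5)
Your reduction of $F$ to the single double integral $\int\!\!\int \mathds{1}_A(a)\,\mathds{1}_B(y)\,\mathds{1}_C(y-a)\,dy\,da$ is the right normal form, and items (2)--(5) are correct and essentially identical to the paper's treatment: the paper also proves (5) by viewing $F$ as the integral of a convolution of indicator functions, its evaluation of (4) as $4\int_0^{1/2}\!\int_0^{1/2}(1-x)(1-y)\,dx\,dy$ is the same $(3/4)^2$ your product factorization yields, and it offers no details for (1)--(3), so your change-of-variables arguments are a reasonable expansion of ``follows from the definition.''

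The one step that does not close is the assembly of (1). Your translation substitution establishes $F(A+v,B+v,C)=F(A,B,C)$ --- note that $C$ is \emph{not} translated --- and this cannot combine with the $\mathrm{SL}_2(\R)$ step to give invariance under the diagonal affine action $(A,B,C)\mapsto(MA+v,MB+v,MC+v)$ as the statement literally reads. In fact that literal claim is false: the variable $a\in A$ enters $F$ as a translation \emph{vector} applied to $C$, not as a point on the same footing as points of $B$ and $C$, so for $A=B=C$ equal to the unit disk centered at the origin one has $F>0$, while translating all three sets far from the origin makes $B\cap C_a=\emptyset$ for every $a\in A$ and hence $F=0$. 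The invariance that is true, and that your substitutions actually prove, is $F(MA,\,MB+v,\,MC+v)=F(A,B,C)$: the affine group acts affinely on the second and third slots but only through its linear part on the first. Nothing downstream is affected, since the paper only ever applies (1) together with (2) to the linear map $\gamma$ of Proposition~\ref{prop:st_shape}, which fixes the origin; but you should either record the corrected invariance or restrict (1) to the linear action, rather than asserting that your two identities yield the diagonal affine invariance as stated.
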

\begin{proof}  Statements (1), (2), and (3) follow from the definition of $F$. For (4) we can use the symmetry of the unit square and compute directly in each quadrant as follows.
\[F(Q,Q,Q) =  4\int_{0}^{1/2}\int_{0}^{1/2} (1-x)(1-y)dxdy = \frac{9}{16}.\]		
For (5), we observe that $F$ is just the integral of the $\R^2$ convolution of indicator functions. 
\begin{eqnarray*}
F(\R^2,X,X) &=& \int_{w\in \R^2} area( X\cap X_w) \, dw\\
					&=& \int_{w \in \R^2} \int_{s\in \R^2}\mathds{1}_X(s)\mathds{1}_X(s+w) \,ds dw\\
					&=&  \int_{s\in \R^2}\left(\mathds{1}_X(s)\int_{x\in \R^2}\mathds{1}_X(s+w) \,dw \right) \,ds\\
					&=&  \int_{s\in \R^2}\mathds{1}_X(s) \,ds \int_{w\in \R^2}\mathds{1}_{X_{-s}}(w) \, dw \\
					&=& area(X)^2
\end{eqnarray*}

\end{proof}


\section{The shape of Dirichlet domains}

Whenever $a = \re \tau \neq 0$ the shape of $D_\tau$ is a hexagon with parallel sides. Otherwise, it is a rectangle. It is clear that $D_\tau$ and $D_{-\overline{\tau}}$ are mirror images of each other across the vertical axis. We will therefore assume that $a \in [0,1/2]$ for the rest of this section.

\begin{proposition}
Let $\tau= a+ib \in \cM$ with $a \in [0,1/2]$. Then  
$D_\tau$ is given by the convex hull of the following six points  
$A = (1/2,\alpha)$, $B=(a-1/2,\beta)$, $C=(-1/2,\alpha)$, $A'=-A$, $B'=-B$, and $C'=-C$, where 
$\displaystyle{\alpha = \frac{b^2+a^2-a}{2b}}$ and $\displaystyle{\beta=  \frac{b^2 - a^2 +a}{2b}}$.
\end{proposition}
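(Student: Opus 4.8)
The plan is to compute the Dirichlet domain directly from its definition as an intersection of half-planes. Since $\Gamma_\tau = \langle 1, \tau \rangle$ acts by translations $y \mapsto y + m + n\tau$ for $(m,n) \in \Z^2$, the condition $d(o, y) \le d(o, \gamma y)$ for $\gamma$ the translation by $v = m + n\tau$ is equivalent to $\|y\|^2 \le \|y - v\|^2$, i.e. to the linear inequality $\langle y, v \rangle \le \tfrac12 \|v\|^2$. Thus $D_\tau = \bigcap_{v} H_v$ where $H_v = \{y : \langle y, v\rangle \le \tfrac12\|v\|^2\}$ is the half-plane bounded by the perpendicular bisector of the segment $[o, v]$, and the intersection runs over all nonzero $v$ in the lattice. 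The first step is to observe that for a lattice of covolume $b$ (the area of the fundamental domain, since $\det\langle 1, \tau\rangle = b$), only finitely many $v$ contribute a face; with $a \in [0, 1/2]$ and $b > \sqrt{1-a^2} \ge 1/2$, the relevant short vectors are $\pm 1$, $\pm \tau$, and $\pm(\tau - 1)$, giving (at most) six faces.

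The second step is to intersect the six corresponding half-planes and identify the vertices. The bisector of $[o, 1]$ is the vertical line $x = 1/2$; the bisector of $[o, -1]$ is $x = -1/2$. The bisector of $[o, \tau]$ is the line $\{y : \langle y, (a,b)\rangle = \tfrac12(a^2+b^2)\}$, and similarly for $[o, \tau - 1]$ one uses the vector $(a-1, b)$ with $\|\tau-1\|^2 = (a-1)^2 + b^2$. Intersecting $x = 1/2$ with the bisector of $[o,\tau]$ yields the point with $y$-coordinate $\tfrac{1}{2b}\big((a^2+b^2) - a\big) = \alpha$, i.e. the point $A = (1/2, \alpha)$; intersecting the bisectors of $[o,\tau]$ and $[o, \tau-1]$ (which are parallel to, respectively, $(-b, a)$ and $(-b, a-1)$, hence not parallel to each other) gives the point $B = (a - 1/2, \beta)$ after a short computation simplifying $\beta = \tfrac{1}{2b}\big(b^2 - a^2 + a\big)$; and $C = (-1/2, \alpha)$ comes from $x = -1/2$ meeting the bisector of $[o, \tau]$. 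The centrally symmetric partners $A' = -A$, $B' = -B$, $C' = -C$ arise identically from $-1$, $-\tau$, $1 - \tau$, using that $y \in H_v \iff -y \in H_{-v}$, so $D_\tau = -D_\tau$. One then checks that these six points, in cyclic order $A, B, C, A', B', C'$, are the vertices of a convex hexagon whose edges lie on exactly the six bisectors listed, so the convex hull of the six points equals the intersection of the six half-planes.

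The third step is to verify that no other lattice vector cuts off part of this hexagon — equivalently, that every vertex of the hull satisfies $\langle y, v\rangle \le \tfrac12\|v\|^2$ for all remaining $v$ in the lattice. It suffices to check the finitely many next-shortest vectors (such as $\pm(\tau+1)$, $\pm 2$, $\pm 2\tau$, etc.), and the bound $b > \sqrt{1 - a^2}$ is exactly what guarantees, e.g., that the bisector of $[o, \tau + 1]$ does not separate $o$ from $A$ or $B$. I expect \textbf{this last verification to be the main obstacle}: the vertex-identification in step two is routine linear algebra, but one must be careful to use the precise defining inequalities of $\cM$ (the condition $b \ge \sqrt{1-a^2}$, with the boundary behavior at $b = \sqrt{1-a^2}$) to rule out all competing lattice vectors and to confirm the hexagon degenerates to a rectangle exactly when $a = 0$. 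A clean way to organize this is to note that $A$ lies on the bisectors of both $[o,1]$ and $[o,\tau]$ and hence is equidistant from $o$, $1$, and $\tau$; its distance to $o$ is $\sqrt{1/4 + \alpha^2}$, and one shows this is $\le \tfrac12\|v\|$ for every other lattice vector $v$, which reduces to a short finite case check given $a \in [0,1/2]$, $b \ge \sqrt{1-a^2}$.
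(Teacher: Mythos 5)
Your proposal follows essentially the same route as the paper: realize $D_\tau$ as the intersection of the perpendicular-bisector half-planes $H_\gamma$, argue that only the six vectors $\pm 1$, $\pm\tau$, $\pm(\tau-1)$ contribute faces, and compute the pairwise intersections of the bounding lines to obtain $A$, $B$, $C$ and their negatives. The only difference is one of emphasis: the step you flag as the main obstacle (verifying that no other lattice vector cuts the hexagon) is exactly what the paper dispatches with the phrase ``a simple computation shows,'' so your more explicit treatment of it is a welcome elaboration rather than a divergence.
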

\begin{proof}
The Dirichlet domain is given by the intersection of the half-planes 
$$ H_\gamma =\{x \in \R^2 \, |\, d(x, o) \leq d(\g x, o)\} \text{ for } \g \in \Gamma_\tau.$$ 
Since $\Gamma_\tau = \langle 1, \tau \rangle$, most intersections are redundant. Indeed, a simple computation show that since $a \in [0,1/2]$, the Dirichlet domain is  given by the intersection of only $6$ half-planes corresponding to $\g = (1,0), (a,b), (a-1,b)$ and their inverses.

Therefore $D_\tau$ is bounded by the $6$ lines whose equations are 
$$L_{(1,0)} := \{x=1/2\}$$
$$L_{(a,b)} := \{ax+by = \frac{a^2+b^2}{2}\}$$
$$L_{(a-1,b)} := \{(a-1)x+by = \frac{(a-1)^2+b^2}{2}\},$$
and their reflections around the origin. 

It is then easy to compute that 
\begin{align*}
& L_{(1,0)}\cap L_{(a,b)} = (1/2,\alpha)=A  \\
& L_{(a,b)}\cap L_{(a-1,b)} = (a-1/2,\beta)=B  \\
&  L_{(-1,0)}\cap L_{(a,b)} = (-1/2, \alpha) = C.
\end{align*}
\end{proof}

We will apply an element of $\mathrm{GL}_2(\R)$ to map $D_\tau$  to a particular hexagon family. Let $H(s,t)$ be the hexagon obtained from the square $[-1/2, 1/2] \times [-1/2,1/2]$ by removing the two right angled triangles $T$ and $-T$ from the corners. The edge lengths of $T$ and $-T$ will be $s$ and $t$ as seen in Figure \ref{fig:H_st}.

\begin{figure}[hbt]
    \begin{center}
\begin{overpic}[scale=1]{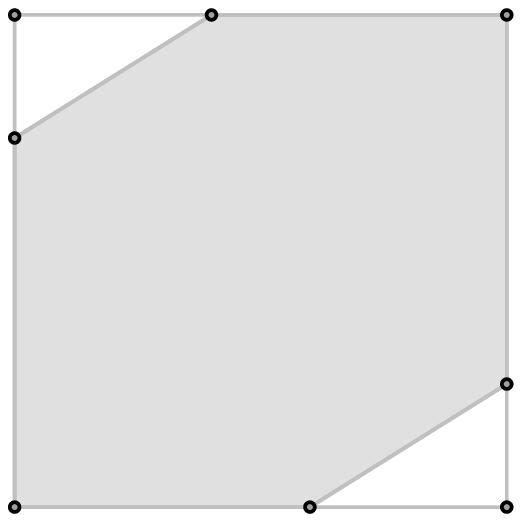}
\put(20,97){$s$}
\put(0,82){$t$}
\put(40,45){$H(s,t)$}
\put(13,84){$T$}
\put(77,10){$-T$}
\end{overpic}
\end{center}
\caption{The $H(s,t)$ hexagon inside the unit square $Q$.}
\label{fig:H_st}
\end{figure}

\begin{proposition}\label{prop:st_shape}
Let $\tau= a+ib \in \cM$ with $a \in [0,1/2]$. Then $D_\tau$ is $\mathrm{GL}_2(\R)$ conjugate to $H(s,t)$ for  $s= a$ and $t= \frac{a}{a^2+b^2}$.
\end{proposition}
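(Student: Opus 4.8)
The plan is to produce an explicit affine map $\Phi \in \mathrm{GL}_2(\R)$ taking $D_\tau$ to $H(s,t)$ with the claimed parameters. The previous proposition describes $D_\tau$ as the convex hull of $A=(1/2,\alpha)$, $B=(a-1/2,\beta)$, $C=(-1/2,\alpha)$ and their negatives, with $\alpha = \frac{b^2+a^2-a}{2b}$ and $\beta = \frac{b^2-a^2+a}{2b}$. The hexagon $H(s,t)$ is, by construction, the convex hull of the six points $(\tfrac12,\tfrac12)$, $(\tfrac12 - s, \tfrac12)$, $(\tfrac12, \tfrac12 - t)$ and their negatives: cutting the corner triangle $T$ off the unit square replaces the vertex $(\tfrac12,\tfrac12)$ by the two points $(\tfrac12 - s, \tfrac12)$ and $(\tfrac12, \tfrac12 - t)$, and similarly at $-T$. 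So it suffices to exhibit a linear map carrying the ordered $6$-tuple of vertices of $D_\tau$ to the ordered $6$-tuple of vertices of $H(a, \tfrac{a}{a^2+b^2})$, since a linear map sending vertices to vertices (respecting the cyclic adjacency) sends convex hull to convex hull.

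First I would normalize. Since $D_\tau$ has the two horizontal edges $AC$ and $A'C'$ at heights $\pm\alpha$, and these edges have length $|A - C| = 1$, I would apply the linear map that fixes the $x$-coordinate and rescales the $y$-coordinate by $\frac{1}{2\alpha}$, sending $A, B, C$ to $(\tfrac12, \tfrac12)$, $(a - \tfrac12, \tfrac{\beta}{2\alpha})$, $(-\tfrac12, \tfrac12)$. This already matches the two ``uncut'' vertices $\pm(-\tfrac12,\tfrac12)$ of a hexagon of the form $H(s,t)$ — wait, more precisely it shows the top edge of the image runs from $(-\tfrac12,\tfrac12)$ to $(\tfrac12,\tfrac12)$, which is the top edge of the unit square. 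But the image is not yet of the form $H(s,t)$ because the side edges $CB$ and $C'B'$ need not be vertical. I would then apply a horizontal shear $(x,y) \mapsto (x + \kappa y, y)$ chosen to make the edge through $C=(-\tfrac12,\tfrac12)$ and $B$ vertical; by the central symmetry the edge through $C' = (\tfrac12,-\tfrac12)$ and $B'$ becomes vertical simultaneously. After the shear, the vertex $C$ stays put only up to the shear's effect on it, so I would track where all six points land and read off $s$ and $t$.

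Concretely, after the vertical rescaling the six vertices are $\pm(\tfrac12,\tfrac12)$, $\pm(a-\tfrac12, \tfrac{\beta}{2\alpha})$; note $\alpha + \beta = b$, so $\tfrac{\beta}{2\alpha} = \tfrac{b - \alpha}{2\alpha}$, and one computes $\tfrac12 - \tfrac{\beta}{2\alpha} = \tfrac{2\alpha - b + \alpha}{2\alpha}$... I will instead keep symbolic fractions and simplify at the end. The key algebraic identities I expect to need are $\alpha + \beta = b$ and $\alpha\beta = \tfrac{(b^2 + a^2 - a)(b^2 - a^2 + a)}{4b^2}$, together with $a^2 + b^2 = |\tau|^2$; these let me show the relevant slopes and offsets collapse to $s = a$ and $t = \tfrac{a}{a^2+b^2}$. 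A cleaner route may be to verify directly that the specific map $\Phi(x,y) = \bigl(x + \tfrac{a - 1/2}{1/2 - \beta/(2\alpha)}\,(\tfrac12 - y)\cdot(\text{something}),\ \tfrac{y}{2\alpha}\bigr)$ — after I pin down the shear constant — sends the three named vertices of $D_\tau$ to $(\tfrac12,\tfrac12)$, $(\tfrac12 - t', \tfrac12)$-type points; by symmetry the other three follow, and $\det \Phi \neq 0$ is automatic from $\alpha > 0$.

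The main obstacle is purely computational bookkeeping: choosing the shear constant $\kappa$ correctly and then simplifying the resulting coordinates of $B$ (equivalently $B'$) to recognize the edge lengths $s$ and $t$ of the excised triangles. There is no conceptual difficulty — a linear map between two hexagons with parallel opposite sides and matching ``uncut'' top/bottom edges is forced once two adjacent vertices are matched — but one must be careful that the image really is $H(s,t)$ with $T$ and $-T$ \emph{removed} (i.e. that the cut corners sit at $\pm(\tfrac12,\tfrac12)$ and not elsewhere), which comes down to checking the signs of $a - \tfrac12$ and of $\tfrac12 - \tfrac{\beta}{2\alpha}$ under the standing hypothesis $a \in [0,1/2]$, $b > \sqrt{1-a^2}$. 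I would close by remarking that $s = a \in [0,\tfrac12]$ and $t = \tfrac{a}{a^2+b^2} \le a$ automatically (since $a^2 + b^2 \ge 1$ on $\cM$), so the construction of $H(s,t)$ makes sense and the two excised triangles are disjoint, as required.
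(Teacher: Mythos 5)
Your high-level strategy --- exhibit an explicit element of $\mathrm{GL}_2(\R)$ by matching the six vertices, since a linear bijection respecting cyclic adjacency carries convex hull to convex hull --- is exactly the paper's approach. However, the concrete construction you propose cannot be made to work, because it rests on a misreading of the combinatorics of $D_\tau$. The segments $AC$ and $A'C'$ are \emph{not} edges of $D_\tau$: since $\beta - \alpha = \frac{a(1-a)}{b} > 0$ for $a \in (0,1/2]$, the vertex $B$ lies strictly \emph{above} the chord $AC$, so the top of $D_\tau$ consists of the two edges $AB \subset L_{(a,b)}$ and $BC \subset L_{(a-1,b)}$, while the genuinely vertical edges are $A(-C)$ and $C(-A)$ at $x = \pm 1/2$. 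Your proposed normalization --- rescale $y$ by $\frac{1}{2\alpha}$, then apply a horizontal shear $(x,y)\mapsto(x+\kappa y, y)$ --- is a composite of two maps that each multiply the $y$-coordinate by a fixed positive constant, so in the image $B$ still sits at height $\frac{\beta}{2\alpha} > \frac12$, strictly above the images of $A$ and $C$. No hexagon of the form $H(s,t)\subseteq Q$ has a vertex strictly higher than two other vertices lying at height $\frac12$, so no choice of $\kappa$ lands on $H(s,t)$. (Relatedly, your map sends both $A$ and $C$ to points at height $\frac12$ a distance $1$ apart, yet in $H(s,t)$ one of $(\pm\frac12,\frac12)$ is an excised corner and does not even belong to the hexagon; your listed vertex set for $H(s,t)$ also includes the removed corner and omits the uncut ones.)

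The repair is to shear \emph{vertically} rather than horizontally: one must flatten the edge $AB$ onto the line $y=\frac12$ while keeping the edges at $x=\pm\frac12$ vertical. This is what the paper's matrix $\gamma = \begin{pmatrix} 1 & 0\\ \frac{a}{a^2+b^2} & \frac{b}{a^2+b^2}\end{pmatrix}$, i.e.\ $(x,y)\mapsto\bigl(x, \frac{ax+by}{a^2+b^2}\bigr)$, accomplishes: it fixes $x$-coordinates and carries the supporting line $ax+by=\frac{a^2+b^2}{2}$ of the edge $AB$ to $\{y=\tfrac12\}$, after which one reads off $\gamma A = (\tfrac12,\tfrac12)$, $\gamma B = (a-\tfrac12,\tfrac12)$, $\gamma C = (-\tfrac12,\tfrac12-\tfrac{a}{a^2+b^2})$ and hence $s=a$, $t=\frac{a}{a^2+b^2}$. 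Your closing observations (that $s,t\in[0,\tfrac12]$ and the excised triangles are disjoint because $a^2+b^2\ge 1$ on $\cM$) are correct and worth keeping, and placing the cut corner at the upper right instead of the upper left is harmless for a $\mathrm{GL}_2(\R)$-conjugacy statement, but the normalization itself must be rebuilt around the correct edge structure.
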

\begin{proof}
The element 
$$\g=\begin{pmatrix}
    1&0  \\
    \frac{a}{a^2+b^2} & \frac{b}{a^2+b^2}
\end{pmatrix}$$ sends $D_\tau $ to $H_{s,t}$ as
$$\g A = \g \left(\frac{1}{2}, \frac{b^2+a^2-a}{2b}\right)= \left(\frac{1}{2},\frac{1}{2}\right)$$
$$\g B =  \g \left(a-\frac{1}{2}, \frac{b^2 - a^2 +a}{2b}\right) = \left(a-\frac{1}{2}, \frac{1}{2}\right)$$
$$ \g C = \g \left(-\frac{1}{2},  \frac{b^2+a^2-a}{2b}\right)= \left(-\frac{1}{2},\frac{1}{2}-\frac{a}{a^2+b^2}\right)$$

\end{proof}

\begin{corollary}\label{cor:prob}
$P(\tau) = F(H(s,t),H(s,t),H(s,t))/area(H(s,t))^2$
\end{corollary}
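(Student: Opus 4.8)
The plan is to combine the Proposition that gives $P(\tau) = F(D_\tau,D_\tau,D_\tau)/area(D_\tau)^2$ with Proposition~\ref{prop:st_shape}, which produces an explicit $\g \in \mathrm{GL}_2(\R)$ with $\g D_\tau = H(s,t)$ for $s=a$ and $t=a/(a^2+b^2)$. The one point that needs care is that this $\g$ is \emph{not} unimodular: $\det \g = b/(a^2+b^2) > 0$ since $b>0$ for $\tau \in \cM$. So I would first renormalize, writing $\g = \mu\, g$ with $\mu = \sqrt{\det\g}$ and $g = \mu^{-1}\g \in \mathrm{SL}_2(\R)$, so that $H(s,t) = \mu\cdot(g D_\tau)$ is a scaled copy of an $\mathrm{SL}_2(\R)$-image of $D_\tau$.

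Next I would feed this through the properties of $F$ in Proposition~\ref{prop:prop_of_F}. By the scaling property (2), $F(H(s,t),H(s,t),H(s,t)) = \mu^4\, F(gD_\tau,gD_\tau,gD_\tau)$, and by the invariance property (1), $F(gD_\tau,gD_\tau,gD_\tau) = F(D_\tau,D_\tau,D_\tau)$; hence $F(H(s,t),H(s,t),H(s,t)) = \mu^4\, F(D_\tau,D_\tau,D_\tau)$. Likewise $area(H(s,t)) = \mu^2\, area(gD_\tau) = \mu^2\, area(D_\tau)$, since elements of $\mathrm{SL}_2(\R)$ preserve Lebesgue area. Dividing, the factor $\mu^4$ cancels between numerator and denominator and we obtain $F(H(s,t),H(s,t),H(s,t))/area(H(s,t))^2 = F(D_\tau,D_\tau,D_\tau)/area(D_\tau)^2 = P(\tau)$, which is the claim whenever $a = \re\tau \in [0,1/2]$.

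For $\tau$ with $a \in (-1/2,0)$ I would appeal to the remark opening Section~3: $D_\tau$ and $D_{-\overline{\tau}}$ are mirror images across the vertical axis, and a Euclidean reflection descends to an isometry $\Sigma_\tau \to \Sigma_{-\overline{\tau}}$, so $P(\tau) = P(-\overline{\tau})$; since $-\overline{\tau}$ has real part in $(0,1/2)$, the previous paragraph applies to it. Thus the formula holds after replacing $a$ by $|a|$ in the definitions of $s$ and $t$, which is the natural reading of the statement.

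I do not expect a genuine obstacle: all the mathematical content is already packaged in the established invariance and homogeneity of $F$ and in the explicit conjugating matrix of Proposition~\ref{prop:st_shape}. The only subtlety — and the step I would present most explicitly — is the bookkeeping of the non-unimodular factor $\mu$, making visible that it appears to the fourth power in both $F$ and $area(\cdot)^2$ and therefore drops out of the ratio.
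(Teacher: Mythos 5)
Your proof is correct and is exactly the argument the paper intends (the corollary is stated without proof, but it is meant to follow from combining the two propositions via the invariance and scaling properties of $F$). Your explicit bookkeeping of the non-unimodular factor $\mu = \sqrt{\det\g}$, showing it enters both $F$ and $area(\cdot)^2$ to the fourth power and cancels, is a welcome clarification of a step the paper leaves implicit.
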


 \section{Computations}



Fix $s,t \in [0,1/2]$ and let $H=H(s,t) $. We denote by $T$ (resp. $-T$)  the upper left  (resp. the bottom right) triangle. Let $Q = H \cup T \cup (-T)$ be the unit square containing $H$.
 
\begin{lemma}\label{lem:sep}
$F(H,H,H) = F(Q,Q,Q) -4F(H,H,T)  - 2F(H,T,T) -2F(T,Q,Q).$
\end{lemma}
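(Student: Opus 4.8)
The plan is to expand $F(Q,Q,Q)$ using the additivity of $F$ (part (3) of Proposition \ref{prop:prop_of_F}) with respect to the disjoint decomposition $Q = H \sqcup T \sqcup (-T)$ in each of the three slots, and then regroup the resulting $27$ terms using the symmetries of the configuration. First I would write
\[
F(Q,Q,Q) = \sum_{X,Y,Z \in \{H, T, -T\}} F(X,Y,Z),
\]
so that $F(H,H,H)$ appears as one of the $27$ summands and the claimed identity becomes the assertion that the remaining $26$ terms collect into $4F(H,H,T) + 2F(H,T,T) + 2F(T,Q,Q)$. The two key symmetry inputs are: (a) the central symmetry $x \mapsto -x$ preserves $Q$ and swaps $T$ with $-T$, and since $F(A,B,C)$ is built from $area(B \cap C_a)$ integrated over $a \in A$, it is invariant under simultaneously negating all three sets — hence $F(X,Y,Z) = F(-X,-Y,-Z)$; (b) the disjointness $T \cap (-T) = \emptyset$, which forces $F(-T, T, T) = 0$ and more generally kills terms where the integrand vanishes identically.

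The main regrouping I would carry out is the following. Terms with all three sets in $\{T,-T\}$: by symmetry (a) these pair up as $F(T,T,T) = F(-T,-T,-T)$ and $F(T,T,-T) = F(-T,-T,T)$, etc.; I would reassemble the $T$-only and $(-T)$-only contributions, together with the mixed $H$–$T$–$(-T)$ terms, back into the single clean term $2F(T,Q,Q)$ by running the additivity backwards — that is, $F(T,Q,Q) = \sum_{Y,Z} F(T,Y,Z)$ over $Y,Z \in \{H,T,-T\}$, and the factor $2$ comes from adding the mirror term $F(-T,Q,Q)$ which by (a) equals $F(T,Q,Q)$. Next, the terms with exactly two slots equal to $H$: these are $F(H,H,T)$, $F(H,H,-T)$, $F(H,T,H)$, $F(T,H,H)$, and their $\pm$ partners; the three positions of the non-$H$ set give three terms, each occurring once with $T$ and once with $-T$, and symmetry (a) identifies the $-T$ version of each with... not quite the $T$ version, so I would instead observe that $F(H,H,T) = F(-H',\dots)$ — more carefully, I'd use that $H$ is centrally symmetric ($-H = H$) so that $F(H,H,-T) = F(-H,-H,T) = F(H,H,T)$; thus all six of these collapse to $6F(H,H,T)$. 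Wait — the statement has coefficient $4$, not $6$, so the correct bookkeeping must be that two of these six terms (the two where $T$ sits in the first slot, or wherever the asymmetry of $F$'s definition bites) are absorbed into the $2F(T,Q,Q)$ expansion instead; I would track the position-dependence of $F$ carefully, since $F(A,B,C)$ is manifestly symmetric in $B,C$ but not obviously in $A$, to see exactly which terms land where.

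The step I expect to be the genuine obstacle is precisely this last bookkeeping: getting the coefficients $4, 2, 2$ exactly right rather than the naive $6, 3, 3$ one would guess from pure position-counting. The resolution hinges on the partial symmetries of $F$ — namely $F(A,B,C) = F(A,C,B)$ from the definition, and whatever relation central symmetry of $H$ provides — combined with which triples have identically vanishing integrands because $T$ and $-T$ are disjoint and far apart. Once I have catalogued the $27$ terms, marked the zero ones, used $-H = H$ and the $B \leftrightarrow C$ symmetry to merge duplicates, and verified that $\sum_{Y,Z \in \{H,T,-T\}} F(T,Y,Z)$ together with its mirror accounts for exactly the terms not of the form $F(H,H,H)$, $F(H,H,T)$-type, or $F(H,T,T)$-type, the identity falls out by simple addition. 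I would present this as a short case analysis organized by "how many of the three arguments equal $H$," which makes the count transparent and the final tally $1 + 4 + 2 + 2$ (against the $9/16 = F(Q,Q,Q)$ total, read as $27$ raw terms) easy to audit.
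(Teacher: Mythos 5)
Your strategy---expand $F(Q,Q,Q)$ by additivity over $Q = H \sqcup T \sqcup (-T)$ in all three slots and regroup using symmetry---is in substance the paper's own proof (the paper just performs the expansion one slot at a time, which keeps the term count manageable). But as written your argument has two concrete defects. First, the claim that $F(A,B,C)=F(A,C,B)$ is ``manifest from the definition'' is false in general: since $area(B\cap C_a)=area(C\cap B_{-a})$, the substitution $a\mapsto -a$ gives the correct identity $F(A,B,C)=F(-A,C,B)$, so the last two slots may be swapped only when the first argument is centrally symmetric. Fortunately $-H=H$ and $-Q=Q$, so every instance you need is legitimate---but this is exactly the asymmetry that makes the first slot special and turns your naive count of $6$ into the correct $4$, so it cannot be waved at. Second, you explicitly leave the decisive step unfinished (``I would track the position-dependence of $F$ carefully\dots to see exactly which terms land where''); that bookkeeping \emph{is} the lemma, so the proposal as it stands is a plan rather than a proof.

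The resolution you guessed is the right one, and it closes cleanly if you sort the $27$ terms by the \emph{first} slot. The $18$ terms whose first slot is $T$ or $-T$ reassemble by additivity into $F(T,Q,Q)+F(-T,Q,Q)=2F(T,Q,Q)$, using $F(-T,Q,Q)=F(T,-Q,-Q)=F(T,Q,Q)$. Of the $9$ terms with first slot $H$, one is $F(H,H,H)$; the four with exactly two $H$'s are $F(H,H,\pm T)$ and $F(H,\pm T,H)$, each equal to $F(H,H,T)$ by $-H=H$ together with $F(H,B,C)=F(H,C,B)$, giving $4F(H,H,T)$; and the four with exactly one $H$ are $F(H,T,T)=F(H,-T,-T)$ together with $F(H,T,-T)=F(H,-T,T)=0$, giving $2F(H,T,T)$. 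The vanishing here also deserves a line: $T\cap(-T)_w\neq\emptyset$ forces $w\in T-(-T)=T+T$, which lies in the far corner region beyond $Q$ (meeting $Q$ in a null set even when $s=t=1/2$), so the integrand is zero for almost every $w\in H$. With those two points repaired, your tally $1+4+2+2$ gives exactly the stated identity.
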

 
 \begin{proof}
$$ F(Q,Q,Q) = F(H,Q,Q) +2F(T,Q,Q)$$ 
By symmetry of $Q$, $T$, and  $-T$. 
$$F(H,Q,Q) = F(H,H,Q) + 2F(H,T,Q)$$
by symmetry of $H$, $T$ and $-T$. Similarly,
$$F(H,H,Q)= F(H,H,H) +2F(H,H,T)$$
$$F(H,T,Q) = F(H,T,-T) + F(H,T,T)+F(H,T,H).$$
Therefore : 
\begin{eqnarray*}
F(Q,Q,Q) &=& F(H,H,H)+ 2F(H,H,T) +\\
&+& 2(F(H,T,-T) + F(H,T,T)+F(H,T,H))+ 2 F(T,Q,Q)\\
F(Q,Q,Q) &= &F(H,H,H) + 4F(H,H,T)  + 2F(H,T,T) +2F(T,Q,Q)
\end{eqnarray*}
Since $F(H,H,T)=F(H,T,H)$ and $F(H,T,-T)=0$, where the last equality follows from the fact that $(-T) \cap T_w = \emptyset$ for all $w \in H$.

 \end{proof}
 
 \begin{lemma}\label{lem:TQQ}
$F(T,Q,Q)= \frac{st}{24}\left(3 + 2 s + 2 t + s t \right).$
\end{lemma}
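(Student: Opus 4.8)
The plan is to unwind the definition of $F$ and reduce to an elementary polynomial integral over a right triangle. First I would compute $area(Q \cap Q_a)$ explicitly: writing $a = (a_1,a_2)$, the set $Q \cap Q_a$ is the product of the overlaps of $[-1/2,1/2]$ with $[a_i - 1/2, a_i + 1/2]$ in each coordinate, so $area(Q \cap Q_a) = (1-|a_1|)(1-|a_2|)$ whenever $|a_1|,|a_2|\le 1$. For $a$ in the upper-left corner triangle $T$ — which has vertices $(-1/2,1/2)$, $(s-1/2,1/2)$, $(-1/2,1/2-t)$ — we have $a_1 \in [-1/2, s-1/2] \subset (-\infty,0]$ and $a_2 \in [1/2-t,1/2] \subset [0,\infty)$ since $s,t \le 1/2$, so $|a_1| = -a_1$ and $|a_2| = a_2$, giving $area(Q\cap Q_a) = (1+a_1)(1-a_2)$ on all of $T$. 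Hence $F(T,Q,Q) = \int_T (1+a_1)(1-a_2)\, da_1\, da_2$.

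Next I would change variables to move the corner of $T$ to the origin: set $u = a_1 + 1/2 \in [0,s]$ and $v = 1/2 - a_2 \in [0,t]$. Under this (area-preserving) substitution $T$ becomes the standard right triangle $\Delta = \{(u,v) : u,v \ge 0,\ u/s + v/t \le 1\}$, because the hypotenuse of $T$ runs from $(s-1/2,1/2)$ to $(-1/2,1/2-t)$, i.e. from $(u,v)=(s,0)$ to $(u,v)=(0,t)$. Moreover $1 + a_1 = \tfrac12 + u$ and $1 - a_2 = \tfrac12 + v$, so
\[
F(T,Q,Q) = \iint_{\Delta} \left(\tfrac12 + u\right)\left(\tfrac12 + v\right) du\, dv = \iint_{\Delta} \left(\tfrac14 + \tfrac{u}{2} + \tfrac{v}{2} + uv\right) du\, dv.
\]

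Now I would evaluate the four resulting moments of $\Delta$, all of which are standard. One has $\iint_\Delta 1 = \tfrac{st}{2}$; by the centroid $(s/3,t/3)$ of $\Delta$, $\iint_\Delta u = \tfrac{st}{2}\cdot\tfrac{s}{3} = \tfrac{s^2 t}{6}$ and similarly $\iint_\Delta v = \tfrac{st^2}{6}$; and the mixed moment is $\iint_\Delta uv = \int_0^s u \cdot \tfrac{t^2}{2}(1-u/s)^2\, du = \tfrac{s^2 t^2}{2}\int_0^1 w(1-w)^2\, dw = \tfrac{s^2t^2}{24}$, using $\int_0^1 w(1-w)^2 dw = \tfrac{1}{12}$. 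Substituting, $F(T,Q,Q) = \tfrac18 st + \tfrac{1}{12}s^2 t + \tfrac{1}{12}s t^2 + \tfrac{1}{24}s^2 t^2 = \tfrac{st}{24}(3 + 2s + 2t + st)$, as claimed. There is no real obstacle here: the only things to be careful about are the sign determination $|a_1| = -a_1$, $|a_2| = a_2$ on $T$ (which is where $s,t \le 1/2$ gets used) and the bookkeeping in the mixed moment $\iint_\Delta uv$; everything else is routine integration.
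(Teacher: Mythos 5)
Your proof is correct and takes essentially the same approach as the paper: both reduce $area(Q\cap Q_a)$ to $(1-|a_1|)(1-|a_2|)$, translate the corner of $T$ to the origin, and integrate $\left(\tfrac12+u\right)\left(\tfrac12+v\right)$ over the right triangle with legs $s$ and $t$. The only differences are cosmetic — the paper reflects $T$ into the first quadrant and evaluates the iterated integral directly, whereas you keep $T$ in place and use standard triangle moments.
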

\begin{proof}
We can assume, by symmetry across the vertical axis, that $T$ is in the first quadrant. For $(u,v)\in T$, $Q \cap  Q_{(u,v)}$ is a rectangle with sides of length $(1-u)$ and $(1-v)$. Therefore 
$$F(T,Q,Q) = \int_{(u,v)\in T} (1-u)(1-v) \, du dv.$$
$T$ is parametrized  by $u \in [1/2 - s,1/2]$ and $v\in \left[\frac{1}{2} - \frac{t}{s}\left(u- \frac{1}{2} + s \right), 1/2\right].$
Therefore 
\begin{align*}
F(T,Q,Q) &= \int_{u=1/2-s}^{1/2} \int_{v= \frac{1}{2} - \frac{t}{s}\left(u- \frac{1}{2} + s \right)}^{1/2}  (1-u)(1-v) \, d u dv\\
	&= \int_{x=0}^{s}\int_{y=0}^{  \frac{t}{s}\left(s-x\right)}\left(\frac{1}{2}+x\right)\left(\frac{1}{2}+y\right) \,d x d y
\end{align*}
 where we make a change of variables $x = 1/2-u, y = 1/2-v$. Computing further,
 \begin{align*}
F(T,Q,Q) & = \int_{x = 0}^s \frac{1}{2}\left(\frac{1}{2}+x\right)\left(\left(\frac{1}{2}+\frac{t}{s}\left(s-x\right)\right)^2 - \frac{1}{4}\right) \, dx\\
	      & =  \frac{t}{4 s^2} \int_{x = 0}^s \left(1+ 2 x\right)\left(s-x\right)\left(s(1+t) - tx\right) \, dx\\
	     & =  \frac{t}{4 s^2} \int_{x = 0}^s 2 t x^3 + \left( t - 2s - 4 st\right) x^2 - s \left(1 + 2t - 2s - 2s t\right) x + s^2(1+t)  \, dx \\
	     & =  \frac{t}{4 s^2} \left( \frac{t s^4}{2} + \frac{\left(t - 2s - 4 st\right) s^3}{3} - \frac{\left(1 + 2t - 2s - 2s t\right)s^3}{2}+ s^3(1+t)\right)\\
	    & =  \frac{st}{24} \left( 3 s t  + 2 \left(t - 2s - 4 st\right) - 3\left(1 + 2t - 2s - 2s t\right)+ 6 (1+t)\right)\\
	    & = \frac{st}{24}\left(3 + 2 s + 2 t + s t\right)
\end{align*}

\end{proof}

For the remaining computations we will need to introduce the hexagon $V = V(s,t) = \{ u - v \mid u, v \in T \}$. Note that $V$ is the set of all vectors $w$ such that $T \cap T_w \neq \emptyset$. It is not hard to see that $V$ is the hexagon depicted in Figure \ref{fig:V_st}.

\begin{figure}[hbt]
    \begin{center}
\begin{overpic}[scale=1]{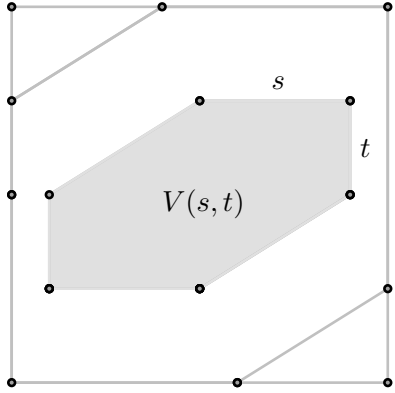}
\put(67,74){$s$}
\put(88,58){$t$}
\put(41,45){$V(s,t)$}
\end{overpic}
\end{center}
\caption{The hexagon $V = V(s,t)$ of all vectors $w$ such that $T \cap T_w \neq \emptyset$.}
\label{fig:V_st}
\end{figure}

 \begin{lemma}\label{lem:HTT}
$ F(H,T,T)=area(T)^2 = \frac{s^2t^2}{4}$
\end{lemma}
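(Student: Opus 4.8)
The plan is to recognize $F(H,T,T)$ as an instance of the convolution identity in Proposition \ref{prop:prop_of_F}(5). By definition
$$F(H,T,T) = \int_{a \in H} area(T \cap T_a)\, da,$$
and the integrand vanishes unless $a$ lies in the hexagon $V = V(s,t) = \{u-v \mid u,v\in T\}$, since $T \cap T_a \neq \emptyset$ precisely when $a \in V$. Thus, as soon as we establish the containment $V \subseteq H$, integrating over $H$ agrees with integrating over all of $\R^2$, so
$$F(H,T,T) = \int_{a \in \R^2} area(T \cap T_a)\, da = F(\R^2, T, T) = area(T)^2 = \frac{s^2 t^2}{4},$$
using Proposition \ref{prop:prop_of_F}(5) together with $area(T) = st/2$. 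So the entire content of the lemma is the geometric fact $V \subseteq H$.

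To prove $V \subseteq H$, I would exploit the direction in which the triangle $T$ is ``thin''. Take $T$ to be the top-left corner triangle of $Q = [-1/2,1/2]\times[-1/2,1/2]$ with horizontal leg $s$ and vertical leg $t$, and let $\ell(x,y) = tx - sy$ be a linear functional normal to the hypotenuse of $T$. A direct check shows that the hypotenuse of $T$ lies on the line $\{\ell = st - (s+t)/2\}$, so that $T = Q \cap \{\ell \le st - (s+t)/2\}$ and, by central symmetry, $-T = Q \cap \{\ell \ge (s+t)/2 - st\}$; hence
$$H = Q \cap \{\, \lvert \ell \rvert \le (s+t)/2 - st \,\}.$$
On the other hand, $\ell$ restricted to $T$ has range of length exactly $st$ (from the far corner $(-1/2,1/2)$, where $\ell = -(s+t)/2$, to the hypotenuse), and since $\ell$ is linear its range on $V$ is $[-st, st]$; moreover $V \subseteq [-s,s]\times[-t,t] \subseteq Q$ because $s,t \le 1/2$. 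Comparing the two descriptions, $V \subseteq H$ reduces to the inequality $st \le (s+t)/2 - st$, i.e. $s + t \ge 4st$, which holds on $[0,1/2]^2$ since $s + t - 4st = s(1-2t) + t(1-2s) \ge 0$.

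The only real obstacle is this last containment, and the crux is the observation that $T$ --- being a right triangle with its right angle at a corner of $Q$ --- is thin exactly in the direction $\ell$ perpendicular to its hypotenuse, which is also the direction along which the corner triangles $T$ and $-T$ were removed to form $H$; after that observation, the matter is a single elementary inequality. (One can equally well read off $V \subseteq H$ by overlaying Figures \ref{fig:H_st} and \ref{fig:V_st}.) Everything else is the bookkeeping of $F$ together with the convolution identity of Proposition \ref{prop:prop_of_F}(5).
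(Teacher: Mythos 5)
Your proof is correct and follows the same route as the paper: both reduce the lemma to the containment $V(s,t) \subseteq H(s,t)$ and then invoke Proposition \ref{prop:prop_of_F}(5) to identify $F(H,T,T)$ with $F(\R^2,T,T) = area(T)^2$. The only difference is that you supply a clean analytic verification of $V \subseteq H$ (via the functional $\ell = tx - sy$ and the inequality $s+t \ge 4st$ on $[0,1/2]^2$), whereas the paper simply asserts this containment as evident from the figures.
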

\begin{proof}
Notice that $V(s,t) \subseteq H(s,t)$ for all $s,t \in [0,1/2]$. Therefore $F(H,T,T) = F(V,T,T) = F(\R^2,T,T) = area(T)^2$ by (5) of Proposition \ref{prop:prop_of_F}.

\end{proof}

 \begin{lemma}\label{lem:HHT}
 $F(H,H,T)= \frac{st}{24} \left( 3 + 2s + 2t - 11 st\right)$
\end{lemma}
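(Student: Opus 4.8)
The plan is to compute $F(H,H,T)$ directly via the definition $F(H,H,T) = \int_{w \in T} \mathrm{area}(H \cap H_w)\, dw$, exploiting the fact that $T$ is a small triangle near the corner so that the translation vectors $w$ are all short. First I would set up coordinates as in the proof of Lemma \ref{lem:TQQ}, assuming $T$ sits in the first quadrant with vertices at $(1/2 - s, 1/2)$, $(1/2, 1/2)$, and $(1/2, 1/2 - t)$, and parametrize $w = (u,v) \in T$. For such $w$, I need the area of the overlap $H \cap H_w$. Since $H$ is the unit square $Q$ with the corner triangles $T$ and $-T$ removed, I would write $\mathrm{area}(H \cap H_w)$ in terms of $\mathrm{area}(Q \cap Q_w)$ by inclusion–exclusion: subtract off the parts of $Q \cap Q_w$ that lie in $T$, in $T_w$, in $(-T)$, or in $(-T)_w$, then add back double-counted pieces. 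Because $w$ is a short vector pointing into the first quadrant (roughly up and to the left), many of these interactions vanish — e.g. $(-T)_w$ near the bottom-right corner barely moves, $T_w$ stays near the top, and the relevant nonempty intersections are geometrically simple polygons. Identifying exactly which of these terms are nonzero for $w \in T$ is the step requiring the most care.

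The cleaner route, which I would pursue in parallel, is to use the additivity already established rather than redo geometry from scratch. From Lemma \ref{lem:sep} we have the identity
\[
F(H,H,H) = F(Q,Q,Q) - 4F(H,H,T) - 2F(H,T,T) - 2F(T,Q,Q),
\]
but this expresses $F(H,H,H)$ in terms of $F(H,H,T)$, not the reverse, so it alone does not pin down $F(H,H,T)$. Instead I would expand $F(Q,Q,Q)$ and $F(H,Q,Q)$ differently. Writing $Q = H \sqcup T \sqcup (-T)$ and using multilinearity (Proposition \ref{prop:prop_of_F}(3)), decompose $F(H,Q,Q) = F(H,H,H) + 2F(H,H,T) + F(H,T,-T) + F(H,T,T) + F(H,-T,T) + F(H,-T,-T)$ — wait, more directly: expand $F(H,H,Q) = F(H,H,H) + 2F(H,H,T)$, so $F(H,H,T) = \tfrac12\big(F(H,H,Q) - F(H,H,H)\big)$. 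This still references $F(H,H,H)$. The genuinely independent input must therefore come from a direct computation of one mixed term; I would target $F(H,H,Q)$ or, better, $F(H,T,Q)$ and $F(H,Q,Q)$, each of which involves $Q$ in enough slots that the overlap regions are rectangles and the integrals are elementary.

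Concretely, I expect the efficient path is: compute $F(Q,Q,T)$ (overlaps are rectangles, as in Lemma \ref{lem:TQQ}, giving a polynomial in $s,t$ of the same flavor), then compute $F(H,Q,T)$ by writing $H = Q \setminus (T \cup (-T))$ so that $F(H,Q,T) = F(Q,Q,T) - F(T,Q,T) - F(-T,Q,T)$, where $F(T,Q,T)$ and $F(-T,Q,T)$ are small corrections supported on the short vectors in $V(s,t) \subseteq H$ and computed using Proposition \ref{prop:prop_of_F}(5) or a direct triangle integral. Finally obtain $F(H,H,T) = F(H,Q,T) - F(H,T,T) - F(H,-T,T)$, using Lemma \ref{lem:HTT} for $F(H,T,T) = s^2t^2/4$ and observing $F(H,-T,T) = 0$ since $(-T) \cap T_w = \emptyset$ for $w \in H$ (already noted in the proof of Lemma \ref{lem:sep}). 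Assembling these polynomial expressions and simplifying should yield $\tfrac{st}{24}(3 + 2s + 2t - 11st)$; the main obstacle is bookkeeping the support regions of each sub-integral correctly — in particular verifying that $V(s,t) \subseteq H(s,t)$ makes the correction terms clean — and making sure no nonzero cross-term is dropped. A sanity check I would run at the end: at $s = t = 0$ every term vanishes, and the leading behavior $\tfrac{st}{8}$ should match the area-squared-type heuristic for small triangles, while the coefficient of $(st)^2$, namely $-11/24$, should be consistent with Lemma \ref{lem:sep} once all pieces are combined to recover a sensible $F(H,H,H)$.
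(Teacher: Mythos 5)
Your final route is genuinely different from the paper's. The paper proves this lemma by partitioning $H$ into seven regions $P_0,\dots,P_6$ according to the combinatorial shape of $H\cap T_w$, and integrates $\mathrm{area}(H\cap T_w)$ over each region separately; your proposal instead uses additivity in the first and second slots to write $F(H,H,T)=F(Q,Q,T)-F(T,Q,T)-F(-T,Q,T)-F(H,T,T)-F(H,-T,T)$, importing $F(Q,Q,T)$ from Lemma \ref{lem:TQQ} and $F(H,T,T)$ from Lemma \ref{lem:HTT}. This is a legitimate and in fact shorter strategy, and it does close: one gets $\frac{st}{24}(3+2s+2t+st)-\frac{s^2t^2}{2}=\frac{st}{24}(3+2s+2t-11st)$. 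Two remarks before the substance: the first two paragraphs of your write-up are a dead end and a circular detour respectively and should be cut; and the reduction $F(Q,Q,T)=F(T,Q,Q)$ silently uses the permutation symmetry $F(A,B,C)=F(C,B,A)$, which is true (substitute $c=x-a$ in the double-integral form of $F$, as in the proof of Proposition \ref{prop:prop_of_F}(5)) but is not among the listed properties of $F$ and must be stated and proved.

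The genuine gap is that you never evaluate the two new terms $F(T,Q,T)$ and $F(-T,Q,T)$, and the justification you sketch for why they are ``clean'' is wrong: you say they are ``supported on the short vectors in $V(s,t)\subseteq H$,'' but $V(s,t)$ is irrelevant here --- these integrals run over $a\in T$ and $a\in -T$ respectively, not over vectors in $V$, and their supports are $(Q-T)\cap T$ and $(Q-T)\cap(-T)$. The correct evaluations, which your argument needs, are as follows. For $F(-T,Q,T)$: if $a\in -T$ then $T_a\subseteq Q$ (check the vertices), so $F(-T,Q,T)=\mathrm{area}(T)^2=\frac{s^2t^2}{4}$; note this term is \emph{not} small --- it contributes exactly half of the $-\frac{s^2t^2}{2}$ correction, the other half coming from $F(H,T,T)$. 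For $F(T,Q,T)$: writing the constraints out, $a\in T$ forces $\frac{a_1+1/2}{s}+\frac{1/2-a_2}{t}\le 1$ while $T_a\cap Q\ne\emptyset$ forces $\frac{-a_1}{s}+\frac{a_2}{t}\le 1$; adding these gives $\frac{1}{2s}+\frac{1}{2t}\le 2$, which fails for $s,t\le 1/2$ except on a null set, so $F(T,Q,T)=0$. (It is a trap to assume this term vanishes ``because $w$ is short'': for $s,t$ near $1/2$ the two triangles nearly touch.) With these two computations and the symmetry above supplied, your proof is complete and correct.
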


\begin{proof} For this computation, we will divide $H$ into 7 disjoint regions $H = \bigcup_{i = 0}^6 P_i$. Consider the line in the $u,v$-plane given by $v = t \, u / s$. Let $$P_0 = \{ (u,v) \in H \mid (v > t \, u / s) \vee ( (u > s) \wedge (v > t) )  \vee ((u < -s)   \wedge  (v < -t))\}$$ 
$$P_1 = \{ (u,v) \in H \mid (u > 0) \wedge (v < 0) \wedge (v + t < t \, u /s) \}$$
$$P_2 =  (s,1/2) \times (0, t)$$
$$P_3 = (-s,0) \times (-t, -1/2)$$
$$P_4 =  \{ (u,v) \in H \mid (0 < u < s) \wedge (0 < v < t) \wedge (v < t \, u /s)  \}$$
$$P_5 = \{ (u,v) \in H \mid (-s < u < 0) \wedge (-t < v < 0) \wedge (v < t \, u /s)  \}$$
$$P_6 = \{ (u,v) \in H \mid (0 < u < s) \wedge (0 < v < t) \wedge (v + t > t \, u /s)  \}$$
Here, $\vee$ denotes ``or'' and $\wedge$ denotes ``and.'' You can see the labeled regions on Figure \ref{fig:HHT}.

\begin{figure}[hbt]
    \begin{center}
\begin{overpic}[scale=1.5]{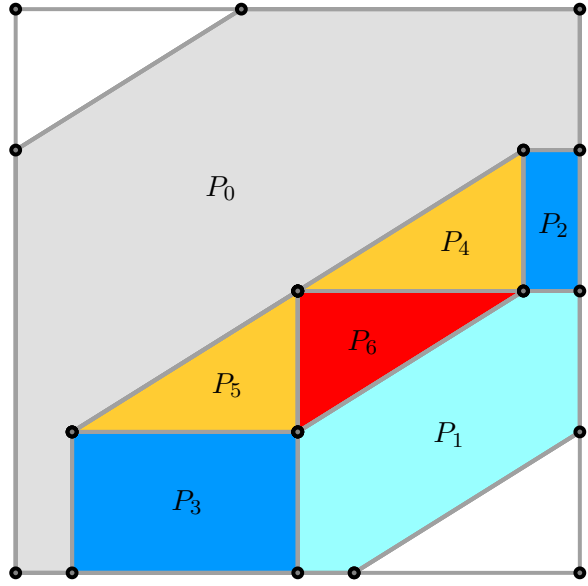}
\put(35,65){$P_0$}
\put(72,25){$P_1$}
\put(88.9,59){$P_2$}
\put(29.5,14){$P_3$}
\put(73,56){$P_4$}
\put(36,33){$P_5$}
\put(58,40){$P_6$}
\end{overpic}
\end{center}
\caption{Partition of $H$ into seven region for computations in Lemma \ref{lem:HHT}.}
\label{fig:HHT}
\end{figure}
\begin{itemize}
\item ($P_0$) It is easy to see that for $w \in P_0$ one has $H \cap T_w = \emptyset$, so $F(P_0, H, T) = 0$.
\item ($P_1$) For $w \in P_1$,  one has $T_w \subset H$. Therefore,  $$F(P_1, H, T) = area(T) \cdot area(P_1) = area(T) \left(\frac{1}{4} - 2 \, area(T)\right) = \frac{st}{8}\left(1 - 4 st\right) .$$
\item ($P2$ and $P3$) Note that $F(P_2,H,T)$ and $F(P_3,H,T)$ are symmetric up to a switch of $s$ and $t$. Let us compute $F(P_2, H, T)$. For very $(u,v) \in P_2$, the intersection $H \cap T_{(u,v)}$ is triangle similar to $T$. In fact, the height of this triangle is exactly $t - v$. Thus, $$F(P_2, H, T) = \int_{u = s}^{1/2} \int_{v = 0}^t \frac{st}{2} \frac{(t-v)^2}{t^2} \, du dv = \frac{st^2}{6}\left(\frac{1}{2} - s\right) = \frac{st^2}{12}\left(1 - 2s\right).$$
By flipping $s$ and $t$, we obtain $$F(P_3, H, T) = \frac{s^2t}{12}\left(1 - 2t\right).$$
\item ($P4$ and $P5$). Similarly, $F(P_4,H,T)$ and $F(P_5,H,T)$ are symmetric up to a switch of $s$ and $t$. Let us compute $F(P_4, H, T)$. For $(u,v) \in P_4$, $area(H \cap T_{(u,v)})$ is the area of a similar triangle of height $t-v$ {\it minus} $area(T \cap T_{u,v})$, as seen in Figure \ref{fig:T_cap_H_1}.
\begin{figure}[hbt]
    \begin{center}
\begin{overpic}[scale=1]{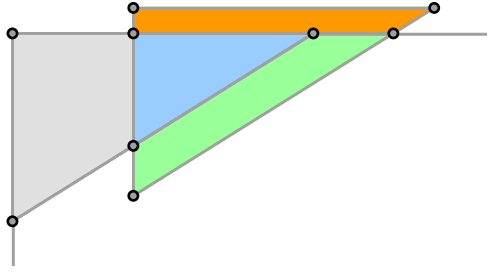}
\end{overpic}
\end{center}
\caption{Partition of $H \cap T_{(u,v)}$ for $(u,v) \in P_4$. The orange part is outside of $H$.}
\label{fig:T_cap_H_1}
\end{figure}
Therefore, 
 \begin{align*}
F(P_4, H, T) &= \int_{u = 0}^s \int_{v = 0}^{t \, u /s}  \frac{st}{2} \frac{(t-v)^2}{t^2} \, du dv  - \int_{(u,v) \in P_4} area(T \cap T_{u,v}) \, du dv\\
		& = \frac{st^2}{6} \int_{u = 0}^s 1 - \left(1 - \frac{u}{s}\right)^3 \, du - F(P_4, T,T)\\
		& = \frac{st^2}{6}\left(s - \frac{s}{4}\right) - \frac{1}{6} \, area(T)^2 =  \frac{st^2}{6}\left(s - \frac{s}{4}\right) - \frac{s^2 t^2}{24} = \frac{s^2t^2}{12}.
\end{align*}
We observe that $F(P_4, T,T) = \frac{1}{6} \, area(T)^2$ as follows. Apply $A \in \mathrm{SL}_2(\R)$ that sends $T$ to an equilateral triangle. Then $A \cdot P_4$ is a sixth of the regular hexagon $A \cdot V$. By symmetry, $F(P_4, T,T) = \frac{1}{6} F(V,T,T) = \frac{1}{6} \, area(T)^2$. Finally, switching $s$ and $t$, $$F(P_5, H, T) =  \frac{s^2t^2}{12}.$$

\item ($P_6$) For $(u,v) \in P_6$, observe that $area(H \cap T_{(u,v)}) = area(T) - area(T \cap T_{(u,v)})$ as in Figure \ref{fig:T_cap_H_2}.
\begin{figure}[hbt]
    \begin{center}
\begin{overpic}[scale=1]{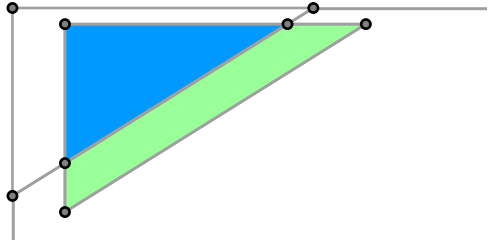}
\end{overpic}
\end{center}
\caption{Partition of $H \cap T_{(u,v)}$ for $(u,v) \in P_6$.}
\label{fig:T_cap_H_2}
\end{figure}
\end{itemize}
We can therefore compute
 \begin{align*}
F(P_6, H, T) & = \int_{(u,v) \in P_6} area(T) - area(T \cap T_{(u,v)}) \, du dv = area(T)^2 - F(P_6, T,T)\\
                    & = \frac{s^2t^2}{4} - \frac{s^2t^2}{24} = \frac{5\, s^2 t^2}{24}.
\end{align*}

We can now combine all of our computations to obtain :
\begin{align*}
F(H,H,T) &= \sum_{i = 0}^6 F(P_i,H,T)\\
	  &= 0 + \frac{st}{8}\left(1 - 4 st\right) +  \frac{st^2}{12}\left(1 - 2s\right)+\frac{s^2t}{12}\left(1 - 2t\right) + \frac{s^2t^2}{12} + \frac{s^2t^2}{12} + \frac{5\, s^2 t^2}{24}\\
	  & = \frac{1}{24}\left( 3st(1-4st) + 2st^2(1-2s) + 2 s^2t(1-2t) + 9 s^2 t^2 \right)\\
	  & = \frac{1}{24}\left(3st + 2 st^2 + 2s^2 t - 11 s^2t^2\right) = \frac{st}{24} \left( 3 + 2s + 2t - 11 st\right).
\end{align*}
\end{proof}

\begin{lemma} $F(H,H,H) = \frac{9}{16} - \frac{st}{12} \left(9 + 6 s + 6 t - 15 st\right) =  \frac{9}{16} - \frac{3st}{4} - \frac{s^2t}{2} - \frac{st^2}{2} + \frac{5 s^2t^2}{4}.$

\end{lemma}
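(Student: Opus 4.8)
The plan is to assemble this identity directly from Lemma~\ref{lem:sep} by substituting the four quantities that the preceding lemmas have already computed. Specifically, Lemma~\ref{lem:sep} gives
\[
F(H,H,H) = F(Q,Q,Q) - 4F(H,H,T) - 2F(H,T,T) - 2F(T,Q,Q),
\]
and we now know each term: $F(Q,Q,Q) = 9/16$ by part (4) of Proposition~\ref{prop:prop_of_F}; $F(H,H,T) = \frac{st}{24}(3 + 2s + 2t - 11st)$ by Lemma~\ref{lem:HHT}; $F(H,T,T) = \frac{s^2t^2}{4}$ by Lemma~\ref{lem:HTT}; and $F(T,Q,Q) = \frac{st}{24}(3 + 2s + 2t + st)$ by Lemma~\ref{lem:TQQ}.

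The computation then proceeds by collecting the $\frac{st}{24}$-scaled terms. We have $-4F(H,H,T) - 2F(T,Q,Q) = -\frac{st}{24}\bigl(4(3+2s+2t-11st) + 2(3+2s+2t+st)\bigr) = -\frac{st}{24}(18 + 12s + 12t - 42st) = -\frac{st}{12}(9 + 6s + 6t - 21st)$, while $-2F(H,T,T) = -\frac{s^2t^2}{2} = -\frac{st}{12}\cdot 6st$. Adding these, the coefficient of $st$ inside the bracket becomes $-21st + 6st = -15st$, so the two "extra" terms combine to $-\frac{st}{12}(9 + 6s + 6t - 15st)$, giving the first claimed form
\[
F(H,H,H) = \frac{9}{16} - \frac{st}{12}(9 + 6s + 6t - 15st).
\]
Distributing $\frac{st}{12}$ across the bracket yields $\frac{9st}{12} + \frac{6s^2t}{12} + \frac{6st^2}{12} - \frac{15s^2t^2}{12} = \frac{3st}{4} + \frac{s^2t}{2} + \frac{st^2}{2} - \frac{5s^2t^2}{4}$, and negating produces the second claimed form $\frac{9}{16} - \frac{3st}{4} - \frac{s^2t}{2} - \frac{st^2}{2} + \frac{5s^2t^2}{4}$.

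This proof is entirely bookkeeping, so there is no genuine obstacle; the only thing to watch is arithmetic consistency in merging the $\frac{st}{24}$ and $\frac{s^2t^2}{4}$ contributions onto a common denominator of $12$, and making sure the sign on each of the four terms from Lemma~\ref{lem:sep} is carried correctly. I would present it as a short displayed chain of equalities starting from the Lemma~\ref{lem:sep} decomposition, substituting the known values, factoring out $\frac{st}{12}$, and then expanding — mirroring the style already used at the end of the proof of Lemma~\ref{lem:HHT}.
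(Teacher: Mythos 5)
Your proposal is correct and follows exactly the paper's own argument: substitute Proposition \ref{prop:prop_of_F}(4) and Lemmas \ref{lem:HHT}, \ref{lem:HTT}, \ref{lem:TQQ} into the decomposition of Lemma \ref{lem:sep} and collect terms over a common factor of $\frac{st}{12}$. The arithmetic checks out, so nothing further is needed.
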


\begin{proof} We now combine Lemmas \ref{lem:sep}, \ref{lem:HHT}, \ref{lem:HTT}, \ref{lem:TQQ} and Proposition \ref{prop:prop_of_F} (4) to compute : 
\begin{align*}
F(H,H,H) &= F(Q,Q,Q) -4F(H,H,T)  - 2F(H,T,T) -2F(T,Q,Q)\\
	       & = \frac{9}{16} - \frac{st}{6} \left( 3 + 2s + 2t - 11 st\right) - \frac{s^2t^2}{2} - \frac{st}{12}\left(3 + 2 s + 2 t + s t \right)\\
	       & = \frac{9}{16} - \frac{st}{12} \left( 6 + 4s + 4t - 22 st + 6 st + 3 + 2s + 2t + st\right)\\
	       & = \frac{9}{16} - \frac{st}{12} \left(9 + 6 s + 6 t - 15 st\right)\\
	       & = \frac{9}{16} - \frac{3st}{4} - \frac{s^2t}{2} - \frac{st^2}{2} + \frac{5 s^2t^2}{4}.
\end{align*}

\end{proof}

{\it Proof of Theorem \ref{thm:flat_torus}} By construction, $area(H) = (1- s t)$. For $\tau = a + ib \in \mathscr{D}$ with $a \in [0,1/2]$, Corollary \ref{cor:prob} gives $$P(\tau) = F(H,H,H)/area(H)^2 = \frac{1}{(1- st)^2} \left( \frac{9}{16} - \frac{3st}{4} - \frac{s^2t}{2} - \frac{st^2}{2} + \frac{5 s^2t^2}{4} \right).$$
By Proposition \ref{prop:st_shape},  $s= a$ and $t= \frac{a}{a^2+b^2}$, so $1-st = \frac{b^2}{a^2+b^2}$. Computing, 
\begin{align*}
P(\tau)  & = \frac{(a^2+b^2)^2}{b^4}  \left(\frac{9}{16} - \frac{3 a^2}{4(a^2+b^2)} - \frac{a^3}{2(a^2+b^2)} - \frac{a^3}{2(a^2+b^2)^2} + \frac{5 a^4}{4(a^2+b^2)^2}\right)\\
& = \frac{1}{16 b^4}  \left(9(a^2+b^2)^2 -12 a^2(a^2+b^2) - 8 a^3(a^2+b^2) - 8a^3 + 20 a^4\right)\\
& =  \frac{1}{16 b^4}\left((9-12+20)a^4+9b^4 + (18 -12)a^2b^2-8a^5-8a^3b^2-8a^3 \right)\\
& = \frac{9}{16} + \frac{3a^2}{8b^2} - \frac{a^3}{2b^2} - \frac{a^3}{2b^4} + \frac{17 a^4}{16b^4} - \frac{a^5}{2b^4}.
\end{align*}
Since $D_\tau$ and $D_{-\overline{\tau}}$ are mirror images of each other across the vertical axis, the above formula holds for all $\tau = a + i b \in \mathscr{D}$ if $a$ is replaced by $|a|$.

\qed


\bibliographystyle{alpha}

 \end{document}